\newtheorem{theorem}{Theorem}
\newtheorem{prop}{Proposition}
\newtheorem{rem}{Remark}
\newtheorem{lemma}{Lemma}
\newtheorem{exam}{Example}
\newtheorem{cor}{Corollary}
\newtheorem{hyp}{Hypothesis}
\providecommand{\customgenericname}{}
\newcommand{\newcustomtheorem}[2]{%
	\newenvironment{#1}[1]
	{%
		\renewcommand\customgenericname{#2}%
		\renewcommand\theinnercustomgeneric{##1}%
		\innercustomgeneric
	}
	{\endinnercustomgeneric}
}
\title{Heights and transcendence of 
	$p$--adic continued fractions}
\author{Ignazio Longhi, Nadir Murru, Francesco M. Saettone}
\address{Department of Mathematics, University of Torino, Italy}
\email{ignazio.longhi@unito.it}
\address{Department of Mathematics, University of Trento, Italy}
\email{nadir.murru@unitn.it}
\address{Department of Mathematics, Ben-Gurion University of the Negev, Israel}
\email{saettone@post.bgu.ac.il}
\date{}
\begin{document}
	
	\maketitle
	
	\begin{abstract}
		Special kinds of continued fractions have been proved to converge to transcendental real numbers by means of the celebrated Subspace Theorem. In this paper we study the analogous $p$--adic problem. More specifically, we deal with Browkin $p$--adic continued fractions. First we give some new remarks about the Browkin algorithm in terms of a $p$--adic Euclidean algorithm. Then, we focus on the heights of some $p$--adic numbers having a periodic $p$--adic continued fraction expansion and we obtain some upper bounds. Finally, we exploit these results, together with $p$--adic Roth-like results, in order to prove the transcendence of three
		families of $p$--adic continued fractions.
	\end{abstract}
	
	%\subjclass[2020]{11J70, 11J87}
	
	\maketitle
	
	\section{Introduction}
	
	Continued fractions are one of the most successful methods to construct transcendental numbers. The first studies in this direction are due to Liouville \cite{Liouville} who dealt with unbounded partial quotients. 
	Later on, Maillet \cite{Maillet} and Baker \cite{baker} exhibited continued fractions with bounded partial quotients converging to transcendental numbers. Furthermore, Baker's results have been recently improved by Adamczewski and Bugeaud \cite{AB07}. The continued fractions studied in these works are \emph{quasi}-periodic, in the sense that they have the form
	\[a_0 + \cfrac{1}{a_1+ \cfrac{1}{a_2+ \cfrac{1}{\ddots}}} = [a_0, a_1, a_2, \ldots]\]
	where $a_i = a_{i+1}$ holds for infinitely many $i$. 
	In \cite{ABD}, the authors proved that the real number $[0, a_1, a_2, \ldots]$ with $a_i = 1 + (\lfloor i \theta \rfloor \bmod k)$, where $0 < \theta < 1$ is irrational and $k \geq 2$ an integer, is transcendental for any $i\ge 1$. This improves a result of Davison \cite{Davison}. % Allouche et al. \cite{sturm} proved the transcendence of a particular class of continued fractions called Sturmian.
	Adamczewski and Bugeaud \cite{AB} showed that if a continued fraction begins with an arbitrarily long palindromic string, then it converges to a transcendental number. Further results on this topic can be found in \cite{sturm, Bug, Davison2, Free, Hone}. The main tool used in the above works is the celebrated Roth's theorem and subsequent developments due to Schmidt and later Evertse. 
	This strategy goes as follows: describe a real number $\alpha$ by its continued fraction expansion and use the latter to obtain infinitely many sufficiently good rational approximations so that, by Roth's theorem, $\alpha$ cannot be algebraic.
	
	Over the years, many people have worked to translate these studies into the field of $p$--adic numbers. The first idea for a continued fraction algorithm over $\mathbb Q_p$ is essentially due to Mahler \cite{Mahler}, whose definition was later improved in particular by Ruban \cite{Ruban} and Browkin \cite{Bro78, Bro00}, leading to different algorithms. A significant difference between them is that Ruban's approach leads to a periodic expansion for some rational numbers, while Browkin's continued fractions are always finite for $\alpha\in\mathbb Q$, precisely like in the classical archimedean version. We also remark that, despite the existence of several definitions of continued fractions in $\mathbb Q_p$, a completely satisfactory $p$--adic counterpart is still missing. The main problem is that it is not known if there exists a $p$--adic continued fraction algorithm which eventually becomes periodic if the input is a quadratic irrational. In other words, there is no analogue of Lagrange's theorem. Actually, Lagrange's theorem fails with respect to Ruban's algorithm \cite{ooto} (see also \cite{CVZ} for an effective criterion of periodicity), while for Browkin's algorithm it remains an open question and only some partial results have been found: see for instance \cite{Bed} and \cite{CMT}.
	In Section \ref{s2.1} we will discuss some more reasons to think of Browkin's approach as the ``right'' one, in the spirit of Ostrowski's theorem.
	
	The main goal of this paper is the construction of transcendental $p$--adic continued fractions by means of Browkin's algorithm. 
	In recent works there has been an increasing interest towards many  families of Ruban's $p$--adic continued fractions converging to transcendental numbers. Ooto \cite{ooto} obtained results inspired by Baker's, and the authors  of \cite{Bel} proved that some quasi--periodic and palindromic $p$--adic continued fractions converge to transcendental numbers. 
	
	Our main results are summarized in the following theorem.
	
	\begin{customthm}{A} \label{thm:A}
		Let $$\alpha = [0, b_1, b_2, b_3, \ldots]$$ be a non--periodic Browkin $p$--adic continued fraction. Assume that one of the following holds: \begin{itemize}
			
			\item[(a)] the sequence $(\lvert b_i \rvert_p)_{i \geq 1}$ is bounded and there are infinitely many subsequences 
			$$b_{n_i} = \ldots = b_{n_i + \lambda_i k_i -1} = p^{-1}$$ 
			(with further technical conditions on $n_i$'s, $\lambda_i$'s and $k_i$'s, to be specified in Theorem \ref{qper} and Hypothesis \ref{hyp:1});
			\item[(b)] the sequence $(b_i)_{i\ge 1}$ begins with arbitrarily long palindromes, $|b_n|_\infty > C$ (for some constant $C > 0$ and $|b_n|_p \geq p^4$ for all $n \gg 0$ and $\alpha$ is not a quadratic irrational.
		\end{itemize}
		Then $\alpha$ is transcendental. Moreover, if the hypothesis of {\em (a)} is weakened to $b_{h+k_i} = b_{h}$, for $n_i \leq h \leq n_i + (\lambda_i-1)k_i -1$, for every $i$ (with small changes in the technical conditions, see Theorem \ref{ooto}), then $\alpha$ is either transcendental or quadratic irrational.
	\end{customthm}
	
	Part (a) corresponds to Baker \cite{baker} and Ooto \cite{ooto}, part (b) to Adamczewski-Bugeaud \cite{AB}.
	The proofs are given in Section \ref{sec:main} (Theorems \ref{qper}, \ref{ooto} and \ref{pal}). 
	We also note that the above constructions yield uncountable many transcendental numbers.
	
	In Section \ref{sec:aux} we study the height of $\mathit{periodic}$ continued fractions and provide some upper bounds for it. These results,  stronger than their analogues in \cite{ooto} and of general interest in the context of bounding the height of algebraic numbers, play a crucial role in the proofs of our main theorems. The other tools we are going to use are the $p$--adic versions of Roth's theorem and the Subspace Theorem.

	\subsection*{Acknowledgements}
	We wish to thank Yann Bugeaud for helpful suggestions, Zev Rosengarten for his careful reading and Lea Terracini for useful comments. Thanks to the referees for valuable suggestions about various versions to the paper.
	
	The second author acknowledge support from Ripple’s University Blockchain Research Initiative. The third author is supported by ISF grant 1963/20 and BSF grant 2018250.

	\section{Preliminaries}
	\label{sec:pre}
	
	Let $p$ be an odd prime. We denote by $v_p(\cdot)$, $\lvert \cdot \rvert_p$ and $\lvert \cdot \rvert_\infty$ the $p$--adic valuation, the $p$--adic absolute value and the Euclidean absolute value, respectively. 
	
	Given $\alpha_0 \in \mathbb Q_p$, a $p$--adic continued fraction expansion $[b_0, b_1, \ldots]$ is obtained by choosing a function $s: \mathbb Q_p \rightarrow \mathbb Q$ and then iterating the following steps
	\begin{equation} \label{eq:dfn0} \begin{cases} b_i = s(\alpha_i) \cr \alpha_{i+1} = \cfrac{1}{\alpha_i - b_i} \end{cases} \end{equation}
	for all $i \geq 0$ and $\alpha_i \not= b_i$; if $\alpha_i=b_i$, the process stops and the continued fraction expansion is finite. The function $s$ is defined so to have $|x-s(x)|_p<1$ for every $x\in\mathbb Q_p$ and plays the same role of the floor function in the classical case. In Browkin's algorithm  the idea is to take the integers between $-(p-1)/2$ and $(p-1)/2$ as a set of representatives for $\mathbb Z$ modulo $p$. Then every $\alpha\in\mathbb Q_p$ is represented uniquely as a power series with coefficients in this set and $s$ is defined as
	\begin{align} \label{eq:bro} 
		s\colon &\mathbb Q_p \longrightarrow \mathbb Q \nonumber \\
		&\alpha= \sum_{i=r}^\infty a_ip^i \mapsto \sum_{i=r}^0 a_ip^i \;\;\text{ with }r \in \mathbb Z\text{ and }a_i\in\left\{-\frac{p-1}{2}, \ldots, \frac{p-1}{2}\right\}.
	\end{align}
	In particular, we have $s(\alpha)=0$ if $\alpha$ is in the maximal ideal of the $p$-adic integers and $|s(\alpha)|_p=p^k$ for some $k\in\mathbb N$ otherwise. 
	
	The $b_i$'s are called \emph{partial quotients} and the $\alpha_i$'s \emph{complete quotients}.
	We define the sequences $(A_i)_{i \geq -2}$ and $(B_i)_{i \geq -2}$ as follows:
	\begin{equation} \label{eq:dfnAB} \begin{cases} A_{-2} = 0, A_{-1} = 1 \cr A_i = b_iA_{i-1} + A_{i-2}, \forall i \geq 0  \end{cases}, \quad \begin{cases} B_{-2} = 1, B_{-1} = 0 \cr B_i = b_iB_{i-1} + B_{i-2}, \forall i \geq 0  \end{cases} \end{equation}
	so that their ratios give the convergents of the $p$--adic continued fraction, i.e., $\frac{A_i}{B_i} = [b_0, \ldots, b_i]$, for all $i \geq 0$. The classical identities
	\begin{equation} \label{eq:id-base} 
		\alpha_0 = [b_0, \ldots, b_i, \alpha_{i+1}] = \cfrac{\alpha_{i+1}A_i+A_{i-1}}{\alpha_{i+1}B_i+B_{i-1}}, \quad A_{i-1}B_i - B_iA_{i-1} = (-1)^i
	\end{equation}
	still hold for $p$--adic continued fractions, since they are obtained in a formal way.
	
	In the following proposition, we summarize some well-known facts about Browkin $p$--adic continued fractions.
	
	\begin{prop}
		\label{prop:varie}
		Given $\alpha_0 = [b_0, b_1, \ldots]$, we have
		\begin{enumerate}
			\item[(i)] $v_p(b_{n}) = v_p(\alpha_n)$, i.e., $\lvert b_n \rvert_p = \lvert \alpha_n \rvert_p$, for all $n \geq 1$.
			\item[(ii)] $v_p(b_n) < 0$, i.e., $\lvert b_n \rvert_p \geq p$, for all $n \geq 1$.
			\item[(iii)] $v_p(A_n) = v_p(b_0) + \ldots + v_p(b_n)$, $\lvert A_n \rvert_p = \lvert b_0 \cdots b_n \rvert_p$, for all $n \geq 0$, if $b_0 \not=0$.
			\item[(iv)] $v_p(A_n) = v_p(b_2) + \ldots + v_p(b_n)$, $\lvert A_n \rvert_p = \lvert b_2 \cdots b_n \rvert_p$, for all $n \geq 2$, if $b_0=0$.
			\item[(v)] $v_p(B_n) = v_p(b_1) + \ldots + v_p(b_n)$, $\lvert B_n \rvert_p = \lvert b_1 \cdots b_n \rvert_p$, for all $n \geq 1$.
			\item[(vi)] $v_p\left( \alpha_0 - \cfrac{A_n}{B_n} \right) = -v_p(B_n B_{n+1})$, for all $n \geq 0$.
		\end{enumerate} 
	\end{prop}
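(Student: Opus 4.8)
The plan is to derive all six items from the single defining property of the Browkin digit map $s$. Writing $\alpha=\sum_{i\ge r}a_ip^i$, one reads off from \eqref{eq:bro} that $\alpha-s(\alpha)=\sum_{i\ge 1}a_ip^i$, so that $v_p(\alpha-s(\alpha))\ge 1$ for \emph{every} $\alpha$, while $s(\alpha)=0$ when $v_p(\alpha)\ge 1$ and $v_p(s(\alpha))=v_p(\alpha)$ when $v_p(\alpha)\le 0$. With this in hand I would settle (i) and (ii) simultaneously. For $n\ge 1$ the recursion \eqref{eq:dfn0} gives $\alpha_n=1/(\alpha_{n-1}-b_{n-1})=1/(\alpha_{n-1}-s(\alpha_{n-1}))$, whence
\[
v_p(\alpha_n)=-\,v_p(\alpha_{n-1}-s(\alpha_{n-1}))\le -1 .
\]
In particular $v_p(\alpha_n)\le 0$, so the second case of the $s$-property yields $v_p(b_n)=v_p(s(\alpha_n))=v_p(\alpha_n)$, which is (i); combining the two displays gives $v_p(b_n)\le -1$, i.e.\ $\lvert b_n\rvert_p\ge p$, which is (ii).

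Next, for (iii)--(v) I would induct on $n$ using the recurrences \eqref{eq:dfnAB}, relying on the elementary ultrametric fact that $v_p(x+y)=\min(v_p(x),v_p(y))$ as soon as $v_p(x)\ne v_p(y)$. Take (v): after checking $B_1=b_1$ and $B_2=b_1b_2+1$ directly, the inductive step compares the two summands of $B_n=b_nB_{n-1}+B_{n-2}$ via the inductive hypothesis and (ii),
\[
v_p(b_nB_{n-1})-v_p(B_{n-2})=v_p(b_{n-1})+v_p(b_n)<0 ,
\]
so the two valuations are distinct and the cross term strictly dominates, giving $v_p(B_n)=v_p(b_n)+v_p(B_{n-1})=\sum_{i=1}^n v_p(b_i)$. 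Parts (iii) and (iv) follow in the same way from $A_n=b_nA_{n-1}+A_{n-2}$: the only difference is the starting index, dictated by whether $b_0\ne 0$ (so that $v_p(b_0)\le 0$ by the $s$-property and the recursion is seeded at $A_0=b_0$) or $b_0=0$ (so that $A_0=0$, $A_1=1$, and the effective recursion starts at $A_2=b_2$).

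Finally, for (vi) I would start from the identity in \eqref{eq:id-base}, namely $\alpha_0=(\alpha_{n+1}A_n+A_{n-1})/(\alpha_{n+1}B_n+B_{n-1})$, subtract $A_n/B_n$, clear denominators and use $A_{n-1}B_n-A_nB_{n-1}=(-1)^n$ to obtain
\[
\alpha_0-\frac{A_n}{B_n}=\frac{(-1)^n}{B_n\,(\alpha_{n+1}B_n+B_{n-1})} .
\]
The numerator is a unit, so it remains to evaluate the denominator. By (i), $v_p(\alpha_{n+1})=v_p(b_{n+1})$, and then (v) yields $v_p(\alpha_{n+1}B_n)=\sum_{i=1}^{n+1}v_p(b_i)=v_p(B_{n+1})$, which is strictly smaller than $v_p(B_{n-1})$ since the difference is once more $v_p(b_n)+v_p(b_{n+1})<0$; hence $v_p(\alpha_{n+1}B_n+B_{n-1})=v_p(B_{n+1})$ and $v_p(\alpha_0-A_n/B_n)=-v_p(B_n)-v_p(B_{n+1})=-v_p(B_nB_{n+1})$.

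The routine part is the bookkeeping of base cases and index shifts; the one genuinely load-bearing point, used in every item, is (ii), i.e.\ that all partial quotients from index $1$ on have strictly negative valuation. This strictness is exactly what forces the two terms in each recurrence (and in the denominator for (vi)) to have \emph{different} $p$-adic valuations, turning the ultrametric inequality into the sharp equalities claimed. Without it—as already happens at $b_0$—the clean product formulas break down, which is precisely why (iii) and (iv) have to be stated separately.
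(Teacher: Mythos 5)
Your proof is correct, and it is substantially more self-contained than the paper's. The paper itself proves only items (iii) and (iv) --- by exactly the induction you describe, comparing $\lvert b_{n+1}A_n\rvert_p$ with $\lvert A_{n-1}\rvert_p$ via (ii) --- and outsources (i), (ii), (v) and (vi) to \cite[section 2]{Bro78}. You instead re-derive the cited items: (i) and (ii) from the two structural facts about Browkin's $s$ (that $v_p(\alpha-s(\alpha))\geq 1$ always, and $v_p(s(\alpha))=v_p(\alpha)$ whenever $v_p(\alpha)\leq 0$), which is precisely the mechanism behind Browkin's original argument; and (vi) from the determinant identity, writing $\alpha_0-\tfrac{A_n}{B_n}=\tfrac{(-1)^n}{B_n(\alpha_{n+1}B_n+B_{n-1})}$ and evaluating the denominator's valuation through (i), (ii) and (v). What your route buys is independence from the reference and a clean isolation of the one load-bearing fact, namely strict negativity of $v_p(b_n)$ for $n\geq 1$, which turns every ultrametric inequality into an equality; what the paper's route buys is brevity. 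Two bookkeeping points if you write this up in full: the identity as printed in the paper's display \eqref{eq:id-base} contains a typo ($A_{i-1}B_i-B_iA_{i-1}$ is identically zero; the intended statement is $A_{n-1}B_n-A_nB_{n-1}=(-1)^n$, which is the form you implicitly and correctly use), and in (vi) the case $n=0$ has $B_{-1}=0$, so the comparison $v_p(B_{n+1})<v_p(B_{n-1})$ there must be read with the convention $v_p(0)=+\infty$ rather than via your difference formula $v_p(b_n)+v_p(b_{n+1})<0$, which presupposes $n\geq 1$.
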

	\begin{proof}
		The proofs of claims (i), (ii), (v) and (vi) can be found in \cite[section 2]{Bro78}. As for (iii) and (iv), they are easily checked for $n=0$, $n=2$ respectively and if they hold up to $n$ then
		$$|A_{n+1}|_p=|b_{n+1}A_n+A_{n-1}|_p=|b_{n+1}A_n|_p$$ 
		since $|b_{n+1}A_n|_p>|A_n|_p\geq|A_{n-1}|_p$ by point (ii), so one can conclude by induction.
	\end{proof}
	
	\begin{rem}\emph{
			The $A_i$'s and $B_i$'s are rational numbers whose denominators are powers of $p$ depending on the valuations of the partial quotients.}
	\end{rem}
	
	\subsection{Some remarks on Browkin's approach to $p$-adic continued fractions}\label{s2.1}
	For any prime $\ell$, let $|\cdot|_\ell$ and $\bar B_\ell(0,1)=\{x\in\mathbb Q:|x|_\ell\leq1\}$ denote respectively the $\ell$-adic absolute value and the closed unit ball in $\mathbb Q$ with respect to $|\cdot|_\ell$. Then we have
	\[\mathbb Z=\bigcap_{\ell\text{ prime}}\bar B_\ell(0,1)\;\;\text { and }\;\;\mathbb Z[p^{-1}]=\bigcap_{\ell\neq p}\bar B_\ell(0,1)\]
	(the sets of $S$-integers of $\mathbb Q$, with $S$ being respectively $\{\infty\}$ and $\{\infty,p\}$).
	Moreover, putting $B_\infty(0,r)=\{x\in\mathbb Q:|x|_\infty<r\}$ for any $r>0$, a simple computation 
	(as performed in \cite[page 69]{Bro78}) shows
	\begin{equation} \label{eq:bro2}
		s(\mathbb{Q}_p)\subseteq\left\{x\in\mathbb Z[p^{-1}]:|x|_\infty<\frac{p}{2}\right\}=B_\infty\!\left(0,\frac{p}{2}\right)\cap\bigcap_{\ell\neq p}\bar B_\ell(0,1)\,.    
	\end{equation}
	The value $p/2$ for the radius of the archimedean ball is optimal (see again \cite[page 69]{Bro78}).
	
	Thus, both the classical floor function and Browkin's definition of $s$ can be seen as instances of a function $s_v\colon\mathbb Q_v\rightarrow\mathbb Q$ (with $v$ a place of $\mathbb Q$ and $\mathbb Q_v$ the completion with respect to $v$) such that \begin{itemize}
		\item[(a)] $|x-s_v(x)|_v<1$ for every $x\in\mathbb Q_v$;
		\item[(b)] the function $s_v$ takes values in the closed unit balls at all places outside $S=\{\infty,v\}$ (i.e., $s_v(\mathbb Q_v)$ is a subset of the ring of $S$-integers);
		\item[(c)] if $v\neq\infty$, then $s_v(\mathbb Q_v)$ is contained in a ball $B_\infty(0,r)$ with small radius (as small as possible, compatibly with the previous conditions).
	\end{itemize}
	Condition (c) is what differentiates Browkin's definition from alternative approaches, like Ruban's (which admits greater values for $|s(x)|_\infty$), and in our opinion is the key reason for a closer analogy with the real case. In Ruban's algorithm, the map $s$ is still defined as in  \eqref{eq:bro}, but with the coefficients $a_i$ chosen in the set $\{0, \ldots, p-1\}$.  One consequence is that Ruban's continued fraction expansion of a rational number $\alpha$ need not be finite, even when $\alpha$ is an integer (e.g., take $\alpha=-p$; see \cite[page 303]{laohak} for details). With Browkin's choice, instead, a $p$-adic number is rational if and only if it has a finite continued fraction expansion, like it happens with the reals. Equivalently, Browkin's continued fractions correspond to a $p$-adic version of the Euclidean algorithm with the property of terminating on $\mathbb Q$ (analogous to the classical property  on $\mathbb Z$), as we are going to illustrate. 
	
	\begin{prop}
		\label{prop:euclide}
		Given any $x$ and $y$ in $\mathbb Q_p$, with $y \not=0$, there exist unique $q \in \mathbb Z\left[\frac{1}{p}\right]$ and $r \in \mathbb Q_p$ with $\lvert q \rvert_\infty <\frac{p}{2}$ and $\lvert r \rvert_p < \lvert y \rvert_p$ such that $x = q y + r$.
	\end{prop}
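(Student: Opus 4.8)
The plan is to read off $q$ and $r$ directly from the Browkin selection function $s$ evaluated at the quotient $x/y$, and then to obtain uniqueness by playing the $p$-adic constraint against the archimedean one.

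For existence I would set $q := s(x/y)$ and $r := x - qy$. By the definition of $s$ in \eqref{eq:bro}, the value $s(x/y)$ is a finite $\mathbb{Z}$-linear combination of nonpositive powers of $p$, so $q \in \mathbb{Z}[1/p]$; and the optimality of the radius recorded in \eqref{eq:bro2} gives $|q|_\infty < p/2$. To bound the remainder I would factor $r = y\bigl(\tfrac{x}{y} - q\bigr)$, so that $|r|_p = |y|_p\,\bigl|\tfrac{x}{y} - s(\tfrac{x}{y})\bigr|_p$; the defining property $|z - s(z)|_p < 1$ of $s$ then yields $|r|_p < |y|_p$, exactly as required.

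For uniqueness, suppose $x = q_1 y + r_1 = q_2 y + r_2$ with both pairs satisfying the constraints. Subtracting, $(q_1 - q_2)y = r_2 - r_1$, and since $|r_1|_p, |r_2|_p < |y|_p$ the ultrametric inequality gives $|r_2 - r_1|_p < |y|_p$, whence $|q_1 - q_2|_p < 1$. The key elementary remark is now that an element of $\mathbb{Z}[1/p]$ of $p$-adic absolute value strictly below $1$ must be an integer multiple of $p$: writing the difference as $a/p^n$ with $a \in \mathbb{Z}$ and $n \geq 0$, the inequality $v_p(q_1-q_2) \geq 1$ forces $p^{n+1} \mid a$, so $q_1 - q_2 \in p\mathbb{Z}$.

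I would then close the argument with the archimedean bound. From $|q_i|_\infty < p/2$ one gets $|q_1 - q_2|_\infty < p$, and the only multiple of $p$ of Euclidean absolute value smaller than $p$ is $0$; hence $q_1 = q_2$, and $r_1 = r_2$ follows at once. I expect the only genuinely delicate point to be precisely this last step: the $p$-adic inequality by itself leaves an entire coset $p\mathbb{Z}$ of admissible differences, and it is exactly the smallness of the radius $p/2$ — the feature singled out in condition (c) and \eqref{eq:bro2} that distinguishes Browkin's choice from Ruban's — that collapses this coset to a single point and secures uniqueness.
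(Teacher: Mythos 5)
Your proof is correct, but it is genuinely different from what the paper does: the paper offers no argument at all for this proposition, simply citing Lager's \emph{A $p$-adic Euclidean algorithm} (Theorem 1 of \cite{Lager09}), whereas you give a self-contained proof built on Browkin's function $s$. Your existence step (take $q = s(x/y)$, $r = y\,(x/y - s(x/y))$, and use $|z - s(z)|_p < 1$ together with the inclusion \eqref{eq:bro2}) and your uniqueness step (the difference $q_1 - q_2$ lies in $p\mathbb Z$ by the $p$-adic bound, and is then killed by the archimedean bound $|q_1 - q_2|_\infty < p$) are both sound; the only slight imprecision is that existence needs the \emph{inclusion} in \eqref{eq:bro2}, not its optimality --- optimality of the radius $p/2$ is irrelevant there, though it is indeed what makes the uniqueness argument as sharp as possible. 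What your route buys is conceptual economy: the paper, immediately after the proposition, has to argue separately (by comparing the two decompositions and invoking uniqueness) that the quotient $q$ of the division algorithm coincides with $s(x/y)$, whereas in your proof this identification is the very definition of $q$, so that remark becomes automatic. What the paper's route buys is brevity and correct attribution, and it keeps the logical dependence pointing outward (division algorithm $\Rightarrow$ continued fraction) rather than inward; your argument instead shows the two formulations are equivalent essentially by construction, which is arguably the cleaner way to present the link the paper wants to emphasize in Section \ref{s2.1}.
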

	\begin{proof}
		See \cite[Theorem 1]{Lager09}.
	\end{proof}
	
	Recall that, for $\alpha \in \mathbb Q_p$, Browkin's $s$ yields $s(\alpha) \in \mathbb Z[p^{-1}]$, $|\alpha-s(\alpha)|_p<1$ and $ |s(\alpha)|_\infty < \frac{p}{2}$. Thus comparing the two sides of
	\[  s\left( \frac{x}{y} \right) +\left(\frac{x}{y}-s\left( \frac{x}{y} \right)\right)= \frac{x}{y} = q + \frac{r}{y}  \]
	we see that the quotient $q$ defined in Proposition \ref{prop:euclide} coincides with $s(x/y)$.
	
	%Recall that for $\alpha \in \mathbb Q_p$ Browkin's $s$ yields $s(\alpha) \in \mathbb Z[p^{-1}]$ and $|s(\alpha)|_\infty < \frac{p}{2}$. Thus the quotient $q$ defined in Proposition \ref{prop:euclide} coincides with $s(x/y)$, since 
	%\[ \frac{x}{y} = q + \frac{r}{y} \] 
	%with $\lvert \frac{r}{y} \rvert_p < 1$. {\color{blue} and consequently  $s\left( \frac{x}{y} \right) = s\left(q + \frac{r}{y} \right) = s(q) = q.$}
	
	As described in \cite[Definition 2]{Lager09}, the $p$-adic Euclidean algorithm is obtained by iteration of the division-with-remainder process described in Proposition \ref{prop:euclide}. If we start with inputs $x,y$ both in $\mathbb Q$, then the algorithm stops (i.e., the remainder becomes $0$) after a finite number of steps \cite[Theorem 2]{Lager09}. In Browkin's algorithm, the continued fraction of $x/y$ results by iterating the same steps.  Indeed, putting $x=x_0, y=x_1$, with  $\lvert x_0 \rvert_p \geq \lvert x_1 \rvert_p \not= 0$, we can write $x_0 = b_0 x_1 + x_2$ where $b_0 = s(x_0 / x_1)$ and $\lvert x_2 \rvert_p < \lvert x_1 \rvert_p$. Repeating this, we get
	\[ x_i = b_i x_{i+1} + x_{i+2}, \]
	for $i = 0, 1, \ldots$, where $b_i = s(x_i / x_{i+1})$. Thus, setting $\alpha_i = \frac{x_i}{x_{i+1}}$, we obtain the continued fraction expansion of $\alpha_0$, since we have
	\[\alpha_i = b_i + \cfrac{1}{\alpha_{i+1}}\]
	and the $b_i$'s and $\alpha_i$'s are obtained as in \eqref{eq:bro}.  
	
	Finally, we note that Browkin's proof that the continued fraction of $\alpha\in\mathbb Q$ is finite (\cite[Theorem 3]{Bro78}) and Lager's proof that the $p$-adic Euclidean algorithm stops if one starts with $x,y\in\mathbb Q$ (\cite[Theorem 2]{Lager09}) are both based on the same computation.
	
	\begin{rem}
		\emph{Let $K$ be a number field, $S_\infty$ the set of its non-archimedean places and $\mathfrak P$ a non-archimedean place of $K$. The question of finiteness in a $\mathfrak P$-adic continued fraction expansion for elements of $K$ is discussed at length in \cite{CMT1}. Here we want to mention that a sufficient condition for ensuring such finiteness arises exactly from adapting condition (c) above: that is, choosing a ``floor function'' $s_{\mathfrak P}$ so that its values are $S$-integers (with $S=S_\infty\cup\{\mathfrak P\}$) which are ``small'' at the archimedean places. See \cite[Theorems 4.5 and 4.6]{CMT1} for precise statements. (One should also compare our conditions (a) and (b) with the more precise axiomatization of $\mathfrak P$-adic floor function in \cite[Definition 3.1]{CMT1}.)  }
	\end{rem}
	
	\subsubsection{On the axiomatization of $s$}
	Let $K$ be a field complete with respect to a discrete valuation $\nu$ and let $\frak m$ be the maximal ideal in its ring of integers. In \cite[section 1]{Bro78} one can find an axiomatic definition of a function $s\colon K\rightarrow K$ which should play the role of the floor function for continued fractions in $K$: namely, $s$ is required to satisfy the three properties
	\begin{itemize}
		\item[(S1)] $s(0)=0$\,;
		\item[(S2)] $\nu\circ s=\nu$\,;
		\item[(S3)] $(a-b)\in\frak m\Longrightarrow s(a)=s(b)$.
	\end{itemize}
	However, there are some problems with this definition. To begin with,  (S2) and (S3) are not compatible (since $\nu$ takes infinitely many distinct values in $\frak m$). Moreover, these conditions do not imply $x-s(x)\in\frak m$ (i.e., the analogue of our condition (a) above): indeed, the function $s\colon\mathbb Q_p\rightarrow\mathbb Q_p$ defined by
	$$s(a)=\begin{cases}0\;\;\;\;\;\;\;\;\text{ if }a\in p\mathbb Z_p\\p^{v_p(a)}\;\text{ otherwise}\end{cases}$$
	satisfies (S1), (S3) and a modified version of (S2), but $2p^{-1}-s(2p^{-1})$ is not in $p\mathbb Z_p$. It is clear from the rest of the paper that what the author had really in mind when stating (S2) and (S3) was the condition $s=s_1\circ\pi$, where $\pi\colon K\rightarrow K/\frak m$ is the quotient map and $s_1$ a section of $\pi$.
	
	\section{Auxiliary results}
	\label{sec:aux}
	
	We collect some auxiliary results needed for proving the main theorems in the next section. In particular, we give some bounds for the (naive) height of algebraic numbers with a specific expansion in Browkin $p$--adic continued fractions. These results may be also of general interest independently of their use. Moreover, we state the $p$--adic version of the Subspace Theorem, which is an essential tool for proving the transcendence of numbers also in the real case. We also give two weaker versions of it (Corollaries \ref{coroth} and  \ref{coschmidt}), which are eventually used in our proofs.
	
	\subsection{Properties of partial quotients, convergents and height bounds}
	
	In the following, unless otherwise stated, all continued fractions developments are based on Browkin's choice of $s$, as in \eqref{eq:bro}, and the sequences $(A_i),(B_i)$ are always defined by \eqref{eq:dfnAB}.
	
	\begin{lemma}\label{1}
		If $\alpha, \beta \in \mathbb Q_p$ have the same $n + 1$ first partial quotients in the $p$--adic continued fraction expansion, then $\lvert \alpha - \beta \rvert_p < \cfrac{1}{\lvert B_n \rvert_p^2}$.
	\end{lemma}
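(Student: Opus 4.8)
The plan is to express both $\alpha$ and $\beta$ through the complete quotients that follow their common initial block, and then exploit the strong triangle inequality. Since $\alpha$ and $\beta$ share the partial quotients $b_0,\dots,b_n$, they share the same convergents, hence the same $A_{n-1},A_n,B_{n-1},B_n$. Writing $\alpha_{n+1}$ and $\beta_{n+1}$ for their respective $(n+1)$-th complete quotients, the first identity in \eqref{eq:id-base} gives
$$\alpha = \frac{\alpha_{n+1}A_n + A_{n-1}}{\alpha_{n+1}B_n + B_{n-1}}, \qquad \beta = \frac{\beta_{n+1}A_n + A_{n-1}}{\beta_{n+1}B_n + B_{n-1}}.$$

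First I would subtract these two expressions over a common denominator. A direct expansion shows that the quadratic term $\alpha_{n+1}\beta_{n+1}A_nB_n$ and the constant term $A_{n-1}B_{n-1}$ cancel, leaving
$$\alpha - \beta = \frac{(\alpha_{n+1}-\beta_{n+1})\,(A_nB_{n-1}-A_{n-1}B_n)}{(\alpha_{n+1}B_n+B_{n-1})(\beta_{n+1}B_n+B_{n-1})}.$$
By the determinant identity in \eqref{eq:id-base}, the factor $A_nB_{n-1}-A_{n-1}B_n$ equals $\pm1$, so the numerator reduces to $\pm(\alpha_{n+1}-\beta_{n+1})$.

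Next I would estimate the two factors in the denominator $p$-adically. By Proposition \ref{prop:varie} the complete quotients satisfy $|\alpha_{n+1}|_p,|\beta_{n+1}|_p\ge p$ (part (ii) applied to the respective expansions), while $|B_n|_p=|b_n|_p|B_{n-1}|_p$ with $|b_n|_p\ge p$ (part (v)). Hence $|\alpha_{n+1}B_n|_p=|\alpha_{n+1}|_p|B_n|_p\ge p^2|B_{n-1}|_p>|B_{n-1}|_p$, so the two summands have distinct absolute values and the ultrametric inequality yields $|\alpha_{n+1}B_n+B_{n-1}|_p=|\alpha_{n+1}|_p|B_n|_p$; likewise for the $\beta$-factor. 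Therefore
$$|\alpha-\beta|_p = \frac{|\alpha_{n+1}-\beta_{n+1}|_p}{|\alpha_{n+1}|_p\,|\beta_{n+1}|_p\,|B_n|_p^2}.$$

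Finally, to obtain the \emph{strict} inequality I would bound the numerator using the same lower bounds: $|\alpha_{n+1}-\beta_{n+1}|_p\le\max(|\alpha_{n+1}|_p,|\beta_{n+1}|_p)$, and since both complete quotients have $p$-adic absolute value at least $p>1$, this maximum is strictly smaller than the product $|\alpha_{n+1}|_p|\beta_{n+1}|_p$. Dividing, the complete-quotient contributions cancel up to a factor strictly less than $1$, giving $|\alpha-\beta|_p<1/|B_n|_p^2$. I expect the only delicate point to be the denominator estimate, namely verifying that $\alpha_{n+1}B_n$ strictly dominates $B_{n-1}$ (and its $\beta$-analogue), which is exactly where parts (ii) and (v) of Proposition \ref{prop:varie} enter; everything else is formal manipulation together with the strong triangle inequality. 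I would also note that the degenerate case in which an expansion terminates at or before step $n$ (so that $\alpha_{n+1}$ or $\beta_{n+1}$ is undefined) is excluded by the hypothesis, which presupposes $n+1$ genuine partial quotients.
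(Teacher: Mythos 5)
Your proof is correct, but it takes a genuinely different route from the paper's. The paper triangulates through the shared convergent: by Proposition \ref{prop:varie}(vi), each of $\alpha$ and $\beta$ satisfies $\lvert \alpha - A_n/B_n\rvert_p = \lvert b_{n+1}\rvert_p^{-1}\lvert B_n\rvert_p^{-2} \le p^{-1}\lvert B_n\rvert_p^{-2}$, where $b_{n+1}$ denotes the $(n+1)$-th partial quotient of the respective expansion (these may differ for the two numbers), and the strong triangle inequality applied to $\alpha - A_n/B_n$ and $\beta - A_n/B_n$ then gives the strict bound. You instead subtract the two M\"obius expressions coming from \eqref{eq:id-base}, cancel via the determinant identity, and arrive at the exact formula $\lvert\alpha-\beta\rvert_p = \lvert\alpha_{n+1}-\beta_{n+1}\rvert_p\,\lvert\alpha_{n+1}\rvert_p^{-1}\lvert\beta_{n+1}\rvert_p^{-1}\lvert B_n\rvert_p^{-2}$. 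In effect you re-prove Proposition \ref{prop:varie}(vi) in a two-variable form: your argument needs only parts (i), (ii), (v) of that proposition, is self-contained, and yields the slightly sharper estimate $\lvert\alpha-\beta\rvert_p\le p^{-1}\lvert B_n\rvert_p^{-2}$; the paper's proof is shorter because it quotes (vi) as a black box (and, incidentally, your version avoids the paper's inessential and not-quite-accurate remark that the $(n+1)$-th convergents of $\alpha$ and $\beta$ coincide, which does not follow from the hypothesis and is not needed). One small caveat: your closing claim that the hypothesis excludes expansions terminating at step $n$ is not right --- sharing $b_0,\dots,b_n$ still allows, say, $\alpha=[b_0,\dots,b_n]$ exactly, in which case $\alpha_{n+1}$ is undefined; but this case costs nothing, since then $\alpha=A_n/B_n$ and the same computation (equivalently, Proposition \ref{prop:varie}(vi) applied to $\beta$) gives $\lvert\alpha-\beta\rvert_p = \lvert\beta_{n+1}\rvert_p^{-1}\lvert B_n\rvert_p^{-2} < \lvert B_n\rvert_p^{-2}$, and if both expansions terminate there then $\alpha=\beta$.
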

	\begin{proof}
		By hypothesis, the $n$--th and $(n+1)$--th convergents of the $p$-adic continued fraction of $\alpha$ and $\beta$ are the same, namely $\frac{A_n}{B_n}$ and $\frac{A_{n+1}}{B_{n+1}}$. By Proposition \ref{prop:varie}, we have
		\[\left\lvert \alpha - \cfrac{A_n}{B_n}  \right\rvert_p = \cfrac{1}{\lvert B_{n+1} \rvert_p \lvert B_n \rvert_p} = \cfrac{1}{\lvert b_{n+1} \rvert_p \lvert B_n \rvert_p^2}\]
		since
		\(v_p(B_{n+1}) = v_p(b_{n+1}B_n + B_{n-1}) = v_p(b_{n+1}) + v_p(B_n).\)
		The same holds for $\beta$ and thus we conclude.
	\end{proof}
	
	\begin{lemma} \label{lemma:inf-p}
		Given $\alpha_0 = [b_0, b_1, \ldots]$, we have $\lvert A_n \rvert_\infty < \lvert A_n \rvert_p$ and $\lvert B_n \rvert_\infty < \lvert B_n \rvert_p$, for all $n\gg0$.
	\end{lemma}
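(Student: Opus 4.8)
The plan is to compare the archimedean and $p$-adic growth rates of the numerators $A_n$ and the denominators $B_n$, and to show that the $p$-adic size grows strictly faster. On the $p$-adic side everything is already pinned down by Proposition \ref{prop:varie}: by (ii) we have $|b_i|_p \geq p$ for every $i \geq 1$, so (v) gives $|B_n|_p = \prod_{i=1}^n |b_i|_p \geq p^n$, while (iii)--(iv) give $|A_n|_p \geq p^{\,n-1}$ (in the case $b_0 \neq 0$ one even gets $|A_n|_p \geq p^n$, since $|b_0|_p \geq 1$). Thus both $|A_n|_p$ and $|B_n|_p$ grow at least like $p^n$, and the whole task is to bound the archimedean absolute values by a geometric rate $\sigma^n$ with $\sigma < p$.

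For the archimedean estimate the crucial observation is that the partial quotients are uniformly bounded at the archimedean place: by \eqref{eq:bro2} we have $|b_i|_\infty = |s(\alpha_i)|_\infty < \frac{p}{2}$ for every $i$, irrespective of how $p$-adically large $b_i$ is. Inserting this into the recurrence $B_n = b_n B_{n-1} + B_{n-2}$ and applying the triangle inequality yields
\[|B_n|_\infty < \frac{p}{2}\,|B_{n-1}|_\infty + |B_{n-2}|_\infty \leq \left(\frac{p}{2}+1\right)\max\{|B_{n-1}|_\infty,\,|B_{n-2}|_\infty\}.\]
Setting $\sigma = \frac{p}{2}+1$ and $M_n = \max\{|B_n|_\infty,\,|B_{n-1}|_\infty\}$, a one-line induction gives $M_n \leq \sigma M_{n-1}$, hence $|B_n|_\infty \leq M_n \leq C\sigma^n$ for a suitable constant $C$ depending only on the initial data; the identical computation bounds $|A_n|_\infty$ by $C'\sigma^n$.

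The comparison then closes the argument. Since $p$ is an odd prime we have $p > 2$, so $\sigma = \frac{p}{2}+1 < p$, equivalently $\sigma/p = \frac12 + \frac1p < 1$. Therefore
\[\frac{|B_n|_\infty}{|B_n|_p} \leq \frac{C\sigma^n}{p^n} = C\left(\frac{\sigma}{p}\right)^n,\]
which tends to $0$ as $n \to \infty$; the same estimate (with $p^{\,n-1}$ in the denominator) holds for $A_n$. In particular both ratios are eventually strictly less than $1$, which is precisely the asserted inequalities for all $n \gg 0$.

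I do not expect a genuine obstacle here: the one point that must be handled carefully is the \emph{uniformity} of the bound $|b_i|_\infty < p/2$, which holds even when $|b_i|_p$ is large precisely because $s$ only collects the non-positive powers of $p$. Without this uniformity the archimedean size could in principle keep pace with the $p$-adic size; once it is in hand, any geometric rate $\sigma < p$ suffices, and the crude choice $\sigma = \frac{p}{2}+1$ is already comfortably below $p$ for every odd prime.
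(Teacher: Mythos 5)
Your proof is correct and follows essentially the same route as the paper's: both bound $\lvert A_n\rvert_\infty,\lvert B_n\rvert_\infty$ geometrically by $C\left(\frac{p}{2}+1\right)^{n}$ using the recurrence together with the uniform archimedean bound $\lvert b_i\rvert_\infty<\frac{p}{2}$ from \eqref{eq:bro2}, and then compare against the $p$-adic lower bound $\gtrsim p^{n}$ from Proposition \ref{prop:varie}, concluding since $\frac{p}{2}+1<p$ for odd $p$. The only cosmetic difference is bookkeeping: the paper runs one induction on a generic sequence $x_n$ with a single constant $M$, while you use the maximum $M_n=\max\{\lvert B_n\rvert_\infty,\lvert B_{n-1}\rvert_\infty\}$; the content is identical.
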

	
	\begin{proof}
		Let $x_0,x_1\in\mathbb Q$ be arbitrary and put 
		$$x_n:=b_nx_{n-1}+x_{n-2}\,.$$
		Choose $M$ so that $|x_0|_\infty<M$ and $\displaystyle|x_1|_\infty<M\left(\frac{p}{2}+1\right)$ both hold. Then 
		\begin{equation} \label{eq:lmm2bn} |x_k|_\infty<M\left(\frac{p}{2}+1\right)^{\!k} \end{equation}
		is true for every $k\in\mathbb N$. Indeed, assuming by induction that \eqref{eq:lmm2bn} is satisfied up to $n-1$, one finds
		\begin{eqnarray*} |x_n|_\infty & \le & |b_n|_\infty\,|x_{n-1}|_\infty+|x_{n-2}|_\infty  \\
			& \leq & \frac{p}{2}|x_{n-1}|_\infty+|x_{n-2}|_\infty\hspace{70pt}\text{ by \eqref{eq:bro2} } \\
			& < & \frac{p}{2} M\left(\frac{p}{2}+1\right)^{\!n-1}+M\left(\frac{p}{2}+1\right)^{\!n-2}=M\left(\frac{p}{2}+1\right)^{\!n-2}\left(\frac{p^2}{4}+\frac{p}{2}+1\right) \\
			& < & M\left(\frac{p}{2}+1\right)^{\!n}\,. \end{eqnarray*}
		In particular, one can choose $M$ such that that \eqref{eq:lmm2bn} applies to both sequences $(A_n)$ and $(B_n)$. Now it is enough to recall Proposition \ref{prop:varie} and note that 
		$$M\left(\frac{p}{2}+1\right)^{\!n}<p^{n-2}\leq\min\big\{|A_n|_p,|B_n|_p\big\}$$ 
		is true for $n\gg0$.
	\end{proof}
	
	\begin{rem}\label{rem:AB}
		\emph{ With slightly more effort, one can check that $\lvert B_n \rvert_\infty \leq \lvert B_n \rvert_p$ holds for all $n\geq-2$. If $|b_0|_p\neq1$, one also has $\lvert A_n \rvert_\infty \leq \lvert A_n \rvert_p$ for all $n \geq -2$.}
	\end{rem}
	
	We recall the following classical definition.
	Let $\alpha$ be an algebraic number of degree $d$: then $\alpha$ is a root of a non-zero polynomial with integer coefficients  $c_0,...,c_d$. The {\em naive height} of $\alpha$ is defined as 
	$$h(\alpha) = \frac{\max \{ \lvert c_i\rvert_\infty: i = 0, \ldots, d \}}{\gcd(c_0, \ldots, c_d)}\,.$$
	
	\begin{lemma} \label{lemma:h1}
		Assume $\alpha\in\mathbb Q_p$ has a periodic Browkin continued fraction
		$$\alpha = [0, b_1, \ldots, b_{k},\overline{b_{k+1}, \ldots, b_{k+t+1}}].$$ 
		Then
		$$h(\alpha) \leq \frac{8}{p^2}\lvert B_{k+t+1} \rvert_p^2 \lvert B_k \rvert_p^2\,.$$
	\end{lemma}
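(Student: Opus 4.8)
The plan is to convert the eventual periodicity into a single explicit quadratic equation satisfied by $\alpha$ and then bound the naive height of that quadratic one place at a time. Since the expansion is infinite, $\alpha\notin\mathbb Q$ (a rational has a finite Browkin expansion), so it suffices to exhibit one nonzero quadratic polynomial vanishing at $\alpha$ and to control its coefficients: being a rational multiple of the primitive minimal polynomial, it will pin down $h(\alpha)$ exactly through the product formula.

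First I would isolate the purely periodic complete quotient. Setting $\gamma:=\alpha_{k+1}=[\overline{b_{k+1},\ldots,b_{k+t+1}}]$, the period length $t+1$ forces $\alpha_{k+t+2}=\gamma$. Applying the identity \eqref{eq:id-base} at the indices $i=k$ and $i=k+t+1$ yields
\[
\alpha=\frac{\gamma A_k+A_{k-1}}{\gamma B_k+B_{k-1}}=\frac{\gamma A_{k+t+1}+A_{k+t}}{\gamma B_{k+t+1}+B_{k+t}}.
\]
Solving each equality for $\gamma$ and equating (equivalently, cross-multiplying after substitution) eliminates $\gamma$ and produces $C_2\alpha^2+C_1\alpha+C_0=0$ with
\begin{align*}
C_2 &= B_{k+t}B_k - B_{k-1}B_{k+t+1},\\
C_1 &= A_{k-1}B_{k+t+1} + B_{k-1}A_{k+t+1} - A_{k+t}B_k - B_{k+t}A_k,\\
C_0 &= A_{k+t}A_k - A_{k-1}A_{k+t+1}.
\end{align*}
By the remark following Proposition \ref{prop:varie}, all $A_i,B_i$ lie in $\mathbb Z[1/p]$, hence so do $C_0,C_1,C_2$; moreover $C_2\neq0$, since otherwise the relation would be linear and $\alpha$ rational, a contradiction.

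For the height I would use that $(C_0,C_1,C_2)$ is a rational multiple of the primitive integer minimal polynomial of the quadratic irrational $\alpha$, so by the scale-invariance of the product formula
\[
h(\alpha)=\prod_v\max_i|C_i|_v\le\Big(\max_i|C_i|_\infty\Big)\Big(\max_i|C_i|_p\Big),
\]
where I discard the factors $\max_i|C_i|_\ell\le1$ at every prime $\ell\neq p$ (legitimate, as the $C_i$ are $\ell$-integral there). For the $p$-adic factor I would invoke Proposition \ref{prop:varie}(iv),(v), giving $|A_n|_p=|b_2\cdots b_n|_p$ and $|B_n|_p=|b_1\cdots b_n|_p$, together with $|b_n|_p\ge p$ from (ii); these yield $|A_n|_p\le|B_n|_p$ and the monotonicity $|B_{n-1}|_p\le p^{-1}|B_n|_p$, whence a term-by-term ultrametric estimate of each $C_i$ gives $\max_i|C_i|_p\le p^{-1}|B_{k+t+1}|_p|B_k|_p$. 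For the archimedean factor I would use Remark \ref{rem:AB}: here $b_0=0$, so $|b_0|_p\neq1$ and therefore $|A_n|_\infty\le|A_n|_p$ and $|B_n|_\infty\le|B_n|_p$, reducing every archimedean size to a $p$-adic one; the triangle inequality and the same monotonicity then give $\max_i|C_i|_\infty\le 4p^{-1}|B_{k+t+1}|_p|B_k|_p$. Multiplying the two estimates yields $h(\alpha)\le 4p^{-2}|B_{k+t+1}|_p^2|B_k|_p^2$, which is a fortiori the claimed bound with constant $8/p^2$.

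The routine but slightly delicate part is the book-keeping: one must carry the four summands of $C_1$ and the two summands of $C_0,C_2$ through the elimination and feed in $|B_{n-1}|_p\le p^{-1}|B_n|_p$ exactly once on the $p$-adic side and once on the archimedean side to extract the two factors of $1/p$ responsible for $1/p^2$ (together with a little care at the boundary indices $k-1$ when $k$ is small). The one genuinely conceptual step, which I would state most carefully, is the equality $h(\alpha)=\prod_v\max_i|C_i|_v$: it rests on $\alpha$ being a quadratic irrational, so that $(C_0,C_1,C_2)$ is proportional to a primitive degree-two integer polynomial, and it is precisely what lets us replace the unknown integer content of the cleared-denominator polynomial by the clean product of the $p$-adic and archimedean maxima.
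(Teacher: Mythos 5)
Your proof is correct and in fact yields the slightly sharper constant $4/p^2$; it follows the same overall strategy as the paper (turn periodicity into an explicit quadratic over $\mathbb Z[p^{-1}]$ built from convergents, then bound its coefficients $p$-adically and archimedeanly), but it differs in two genuine ways. First, the elimination: the paper introduces the purely periodic tail $\beta=[\overline{b_{k+1},\ldots,b_{k+t+1}}]$ with its own convergent sequences $(\tilde A_i),(\tilde B_i)$ and eliminates $\beta$ between $\beta=\frac{\beta\tilde A_t+\tilde A_{t-1}}{\beta\tilde B_t+\tilde B_{t-1}}$ and $\alpha=\frac{\beta A_k+A_{k-1}}{\beta B_k+B_{k-1}}$, producing coefficients with $4+8+4$ summands; you instead use $\alpha_{k+1}=\alpha_{k+t+2}$ and eliminate this single complete quotient between the two expressions of $\alpha$ through its own convergents at indices $k$ and $k+t+1$, which gives only $2+4+2$ summands and hence the constant $4$ in place of $8$. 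Second, the height bookkeeping: the paper clears denominators by multiplying the quadratic by the explicit power $p^{\tilde e_t+2f_k-1}$ and then bounds the resulting integer coefficients, whereas you invoke the product-formula identity $h(\alpha)=\prod_v\max_i|C_i|_v$, valid because $(C_0,C_1,C_2)$ is proportional to the primitive minimal polynomial of the quadratic irrational $\alpha$, and drop the places $\ell\neq p$ where the $C_i$ are integral. The two mechanisms are equivalent in content (the paper's multiplication by a power of $p$ is exactly what your product formula automates), and both proofs rest on the same pillars: Proposition \ref{prop:varie} for the $p$-adic sizes, and Lemma \ref{lemma:inf-p} together with Remark \ref{rem:AB} (applicable since $b_0=0$) for the comparison $|\cdot|_\infty\le|\cdot|_p$.

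One point you should tighten: your justification that $C_2\neq0$ (``otherwise the relation would be linear and $\alpha$ rational'') tacitly assumes the quadratic is not the zero polynomial; if $C_2=C_1=C_0=0$, no contradiction arises and the height bound becomes vacuous. This is easy to repair: your quadratic is the difference
\[
(A_{k-1}-XB_{k-1})(XB_{k+t+1}-A_{k+t+1})-(A_{k+t}-XB_{k+t})(XB_k-A_k),
\]
whose two products have nonzero leading coefficients and root sets $\bigl\{A_{k-1}/B_{k-1},\,A_{k+t+1}/B_{k+t+1}\bigr\}$ and $\bigl\{A_k/B_k,\,A_{k+t}/B_{k+t}\bigr\}$; since by Proposition \ref{prop:varie}(vi) the quantities $|\alpha-A_n/B_n|_p=|B_nB_{n+1}|_p^{-1}$ are strictly decreasing in $n$, convergents at distinct indices are distinct, so the two products cannot coincide as polynomials and the quadratic is nonzero. (The paper's own proof is silent on the same point, so this is not a defect of your route in particular.)
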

	
	There is no loss of generality in assuming $k \geq 1$, since one can always start the period after the first few terms. 
	
	\begin{proof}
		Let us consider $\beta = [\overline{b_{k+1}, \ldots, b_{k+t+1}}]$ and attach to it sequences $(\tilde A_i),(\tilde B_i)$ by \eqref{eq:dfnAB}. 
		Using \eqref{eq:id-base}, we get
		\[ \beta = \cfrac{\beta \tilde A_t + \tilde A_{t-1}}{\beta \tilde B_t + \tilde B_{t-1}}, \quad \alpha = \cfrac{\beta A_k + A_{k-1}}{\beta B_k + B_{k-1}} \]
		from which
		\begin{equation} \label{eq:poly}
			C_0 \alpha^2 + C_1 \alpha + C_2 = 0 
		\end{equation}
		where
		\tiny{
			\[ \begin{cases}
				C_0 = - \tilde A_{t-1} B_k^2 + \tilde A_t B_k B_{k-1} - \tilde B_{t-1} B_k B_{k-1} + \tilde B_t B_{k-1}^2 \cr
				C_1 = 2 \tilde A_{t-1} A_k B_k - \tilde A_{t} A_{k-1} B_{k} + \tilde B_{t-1} A_{k-1} B_k - \tilde A_t A_k B_{k-1} + \tilde B_{t-1}A_k B_{k-1} - 2 \tilde B_t A_{k-1} B_{k-1} \cr
				C_2 = -\tilde A_{t-1} A_k^2 + \tilde A_t A_k A_{k-1} -\tilde B_{t-1} A_k A_{k-1} + \tilde B_t A_{k-1}^2
			\end{cases}\]
		}
		
		\normalsize{}
		By construction we have 
		\[ C_i = \sum_{j=1}^{h_i} C_{ij} \in \mathbb Z\left[p^{-1}\right] \]
		where $h_0=h_2=4$ or $h_1=8$ and the summands $C_{ij}$ are given by the formulae above. In order to bound $h(\alpha)$ by \eqref{eq:poly}, we need to multiply by a power of $p$, so to have integer coefficients. 
		
		Putting
		\begin{equation*} \label{eq:norme} 
			\rvert \tilde A_i \lvert_p = p^{\tilde e_i}, \quad \rvert \tilde B_i \lvert_p = p^{\tilde f_i}, \quad \rvert A_i \lvert_p = p^{e_i}, \quad \rvert B_i \lvert_p = p^{f_i}, 
		\end{equation*}
		we obtain four increasing  sequences of integers $(\tilde e_i), (\tilde f_i), (e_i)$ and $(f_i)$. By Proposition \ref{prop:varie}, we have $\tilde e_i > \tilde f_i>0$ for $i \geq 0$, because $|b_{k+1}|_p\geq p$, and $f_i > e_i>0$ for $i \geq 2$, since $b_0 = 0$; moreover $\tilde e_i + f_k=f_{k+i+1}$ holds for $i\geq0$. By these observations, one can check that 
		\[|C_{ij}|_p \leq p^{\tilde e_t + 2f_k -1} = \frac{1}{p} \lvert B_{k+t+1} \rvert_p \lvert B_{k} \rvert_p\]
		for all $i, j$.
		Therefore multiplying \eqref{eq:poly} by $p^{\tilde e_t + 2f_k-1}$ we obtain a polynomial with integer coefficients satisfied by $\alpha$. This implies
		\[ h(\alpha) \leq \frac{1}{p}  \lvert B_{k+t+1} \rvert_p \lvert B_{k} \rvert_p \max \{ \lvert C_0 \rvert_\infty, \lvert C_1 \rvert_\infty, \lvert C_2 \rvert_\infty \}\,. \]
		
		To conclude note that by Lemma \ref{lemma:inf-p} (and Remark \ref{rem:AB}, using $b_0 = 0$ and $|b_{k+1}| \geq p$) we have
		\[\lvert C_i \rvert_\infty \leq \sum_{j=1}^{h_i} |C_{ij}|_p \leq \frac{8}{p} \lvert B_{k+t+1} \rvert_p \lvert B_{k} \rvert_p\,.\]
	\end{proof}
	
	\begin{lemma} \label{lemma:h2}
		For every $k \geq 1$, given $b_i = \frac{\hat b_i}{p^{a_i}}$ such that $a_i, \hat b_i \in \mathbb Z$, $a_i > 0$, $|\hat b_i|_\infty < \sqrt{\frac{3}{14}} \cdot \frac{p^a}{F_{k+1}}$ (for the $b_i \not= p^{-1}$), where $a = \min\{ a_i : 1 \leq i \leq k, a_i \not= 1 \}$  and $(F_i)_{i \geq 0} = (0, 1, 1, 2, 3, 5, \ldots)$ is the Fibonacci sequence, then 
		$\alpha = [0, b_1, b_2, \ldots, b_{k}, \overline{p^{-1}}]$ satisifies
		$$h(\alpha) \leq \lvert B_{k} \rvert_p^2.$$ 
	\end{lemma}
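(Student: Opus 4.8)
The plan is to specialize the computation of Lemma \ref{lemma:h1} to the case of period length $1$ with periodic part $\overline{p^{-1}}$, and then to replace the crude archimedean estimate (which in Lemma \ref{lemma:h1} came from Lemma \ref{lemma:inf-p}) by a sharp one that exploits the smallness hypothesis on the $\hat b_i$. First I would set $\beta=[\overline{p^{-1}}]$, which by construction satisfies $\beta=p^{-1}+1/\beta$, i.e. the quadratic $p\beta^2-\beta-p=0$, equivalently $\beta^2-p^{-1}\beta-1=0$. Feeding $\tilde A_0=p^{-1},\ \tilde A_{-1}=1,\ \tilde B_0=1,\ \tilde B_{-1}=0$ into \eqref{eq:id-base} and writing $\alpha=(\beta A_k+A_{k-1})/(\beta B_k+B_{k-1})$ yields, exactly as in Lemma \ref{lemma:h1}, a relation $C_0\alpha^2+C_1\alpha+C_2=0$ with
\[ C_0=B_{k-1}^2+p^{-1}B_kB_{k-1}-B_k^2,\qquad C_2=A_{k-1}^2+p^{-1}A_kA_{k-1}-A_k^2, \]
\[ C_1=2A_kB_k-2A_{k-1}B_{k-1}-p^{-1}(A_{k-1}B_k+A_kB_{k-1}), \]
all lying in $\mathbb Z[p^{-1}]$. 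Note the structural factorisations $C_0=(\beta B_k+B_{k-1})(\beta'B_k+B_{k-1})$ and $C_2=(\beta A_k+A_{k-1})(\beta'A_k+A_{k-1})$, where $\beta'$ is the conjugate root (so $\beta\beta'=-1,\ \beta+\beta'=p^{-1}$); these will clarify why the $C_i$ stay bounded even as the convergents grow.

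Next I would do the $p$-adic bookkeeping. Writing $|B_i|_p=p^{f_i}$ and $|A_i|_p=p^{e_i}$, Proposition \ref{prop:varie} gives $f_k>f_{k-1}$ and, since $b_0=0$, also $f_k>e_k>0$; moreover every $|b_i|_p\ge p$, so $v_p(B_k^2)=-2f_k$ is strictly the most negative valuation occurring in $C_0$, whence $|C_0|_p=|B_k|_p^2=p^{2f_k}$. A term-by-term comparison shows $|C_1|_p\le|A_k|_p|B_k|_p$ and $|C_2|_p\le|A_k|_p^2$, both strictly smaller. Therefore multiplying by $p^{2f_k}=|B_k|_p^2$ clears all denominators and gives an integer polynomial $c_0x^2+c_1x+c_2$ satisfied by $\alpha$, with $c_0$ a $p$-adic unit. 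Since $\gcd(c_0,c_1,c_2)\ge1$, this already yields
\[ h(\alpha)\le \max_i|c_i|_\infty=|B_k|_p^2\cdot\max_i|C_i|_\infty. \]

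It remains to prove the sharp archimedean bound $\max_i|C_i|_\infty\le1$, and this is the step I expect to be the real obstacle. The idea is to control the continuants archimedeanly: from the recursions \eqref{eq:dfnAB} together with $|b_i|_\infty<p/2$ from \eqref{eq:bro2} one obtains a Fibonacci-type recursion, and the hypothesis $|\hat b_i|_\infty<\sqrt{3/14}\,p^a/F_{k+1}$ is exactly what forces $|b_i|_\infty\le\sqrt{3/14}/F_{k+1}$ for every $i\le k$ (the quotients equal to $p^{-1}$ contributing only $1/p$, which is even smaller). Propagating this through the recurrence should give $\rho:=\max\{|A_k|_\infty,|A_{k-1}|_\infty,|B_k|_\infty,|B_{k-1}|_\infty\}\le\sqrt{3/14}$, the factor $F_{k+1}$ being calibrated precisely to absorb the Fibonacci growth of the continuants (this Fibonacci estimate, and the near-cancellation visible in the factorisations above, is the delicate technical point). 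Granting it, the triangle inequality on the explicit $C_i$ gives $|C_0|_\infty,|C_2|_\infty\le(2+1/p)\rho^2$ and the dominant $|C_1|_\infty\le(4+2/p)\rho^2$; the binding case is $C_1$, and taking the worst prime $p=3$ one finds $(4+2/p)\rho^2\le(14/3)(3/14)=1$, which fixes the constant $\sqrt{3/14}$. Combined with the displayed inequality this gives $h(\alpha)\le|B_k|_p^2$, as claimed.
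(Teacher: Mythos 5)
Your proposal reproduces the paper's strategy (quadratic relation from $p\beta^2-\beta-p=0$, clearing denominators by $\lvert B_k\rvert_p^2$, then an archimedean bound $\max_i\lvert C_i\rvert_\infty\le 1$), and your $p$-adic bookkeeping is essentially correct (one small slip: when $a_k=1$ the terms $B_k^2$ and $p^{-1}B_kB_{k-1}$ in $C_0$ have the \emph{same} valuation, so $c_0$ need not be a $p$-adic unit; this is harmless, since only $\gcd\ge 1$ is used). The genuine gap sits exactly at the step you flag as ``the real obstacle'', and it cannot be closed the way you propose, because the bound $\rho\le\sqrt{3/14}$ is false. First, your parenthetical claim that the quotients $b_i=p^{-1}$ ``contribute only $1/p$, which is even smaller'' is wrong: $1/p$ exceeds $\sqrt{3/14}/F_{k+1}$ as soon as $F_{k+1}>p\sqrt{14/3}$. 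Second, no propagation through the recurrence can give $\rho\le\sqrt{3/14}$: since $b_0=0$ one always has $A_1=1$, so $\lvert A_{k-1}\rvert_\infty=1$ already for $k=2$; and the continuant expansion of $B_k$ contains addends built purely from the factors $p^{a_j}$ (e.g.\ $\hat B_2=\hat b_1\hat b_2+p^{a_1+a_2}$), which after division by $p^{f_k}$ equal $1$. Concretely, in the paper's own example ($p=5$, $b_1=4/5^2$, $b_2=-3/5^3$) one has $\lvert B_2\rvert_\infty=3113/3125\approx 0.996$ and $\lvert A_1\rvert_\infty=1$, both far above $\sqrt{3/14}\approx 0.463$; and if $b_2,\dots,b_k$ are all equal to $p^{-1}$, then $\lvert B_k\rvert_\infty$ grows like $\bigl((1+\sqrt{1+4p^2})/(2p)\bigr)^{k}$, hence is unbounded.

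You should also know that this defect is not yours alone: the paper's own proof asserts the same false inequality $\lvert B_k\rvert_\infty<\sqrt{3/14}$ (its addend-by-addend Fibonacci estimate, applied to the pure $p$-power addends, would read $1<\sqrt{3/14}/F_{k+1}$), and its triangle-inequality bullet points fail numerically in its own example: $\lvert B_2B_1\rvert_\infty\cdot\frac1p+B_1^2+B_2^2\approx 1.05>1$. That example survives only because of signs ($-B_k^2$ enters negatively, and $A_2A_1<0$), which shows the gap cannot be repaired by \emph{any} absolute-value estimate: the needed inequality is sign-sensitive. Indeed the lemma as stated appears to be false: take $p=5$ and $\alpha=[0,\,4/5^2,\,+3/5^3,\,\overline{5^{-1}}]$, which satisfies Hypothesis \ref{hyp:1}; its primitive minimal polynomial works out to $9277069x^2+4622575x-9806875$, so $h(\alpha)=9806875>5^{10}=\lvert B_2\rvert_p^2=9765625$. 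The one sound instinct in your sketch is the factorization $C_0=(\beta B_k+B_{k-1})(\beta' B_k+B_{k-1})$: since $\lvert\beta'\rvert_\infty<1$, the conjugate factor decays and compensates the growth of $B_k$, and any correct repair of this lemma (yours or the paper's, presumably under a strengthened, sign-aware hypothesis) must run through that cancellation; but your proposal never carries it out, falling back instead on the false uniform bound $\rho\le\sqrt{3/14}$.
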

	\begin{proof}
		We can write
		\[\quad A_i = \frac{\hat A_i}{p^{e_i}}, \quad B_i = \frac{\hat B_i}{p^{f_i}} \]
		where $\hat A_i, \hat B_i \in \mathbb Z$, $v_p(\hat b_i) = v_p(\hat A_i) = v_p(\hat B_i) = 0$, $e_i = a_2 + \ldots + a_i$, $f_i = a_1 + \ldots + a_i$, for any $1 \leq i \leq k$. Note that we have
		\[[0, b_1, \ldots, b_i] = \cfrac{A_i}{B_i} = \cfrac{p^{a_1} \hat A_i}{\hat B_i}\]
		for any $1 \leq i \leq k$.
		We have that $\hat B_{i}$ is the sum of $F_{i+1}$ elements of the kind $\hat b_{i_1}\cdots \hat b_{i_h}p^{a_{j_1}}\cdots p^{a_{j_l}}$ such that $\{i_1, \ldots, i_h, j_1, \ldots j_l\} = \{1, \ldots, i\}$ and $h+l = i$, for any $1 \leq i \leq k$. We prove this by induction. 
		For $i=1$, we have 
		$$\cfrac{A_1}{B_1} = \cfrac{p^{a_1} \hat A_1}{\hat B_1}  = [0, b_1] = \cfrac{1}{b_1} = \cfrac{p^{a_1}}{\hat b_1}, \quad \hat B_1 = \hat b_1$$
		For $i = 2$, we have 
		\[ \cfrac{A_2}{B_2} = \cfrac{p^{a_1}\hat A_2}{\hat B_2} = [0, b_1, b_2] = \cfrac{p^{a_1} \hat b_2}{\hat b_1 \hat b_2 + p^{a_1}p^{a_2}}, \quad  \hat B_2 = \hat b_1 \hat b_2 + p^{a_1}p^{a_2}. \]
		For any $3 \leq i \leq k$, we have
		\[ \cfrac{A_i}{B_i} = \cfrac{b_{i} A_{i-1} + A_{i-2}}{b_{i} B_{i-1} + B_{i-2}} = \cfrac{\cfrac{\hat b_{i}}{p^{a_{i}}} \cdot \cfrac{\hat A_{i-1}}{p^{e_{i-1}}} + \cfrac{\hat A_{i-2}}{p^{e_{i-2}}}}{\cfrac{\hat b_{i}}{p^{a_{i}}} \cdot \cfrac{\hat B_{i-1}}{p^{f_{i-1}}} + \cfrac{\hat B_{i-2}}{p^{f_{i-2}}}} = \cfrac{p^{a_1}(\hat b_{i} \hat A_{i-1} + p^{a_{i-1}}p^{a_i} \hat A_{i-2})}{\hat b_i \hat B_{i-1} +  p^{a_{i-1}}p^{a_i}\hat B_{i-2}}. \]
		By inductive hypothesis, $\hat B_{i-1}$ has $F_{i}$ addends of the kind  $\hat b_{i_1}\cdots \hat b_{i_h}p^{a_{j_1}}\cdots p^{a_{j_l}}$ with\\  $\{i_1, \ldots, i_h, j_1, \ldots j_l\}$ $= \{1, \ldots, i-1\}$ and $h+l = i-1$. Thus, $\hat b_{i} \hat B_{i-1}$  has $F_{i+1}$ addends of the kind  $\hat b_{i_1}\cdots \hat b_{i_h}p^{a_{j_1}}\cdots p^{a_{j_l}}$ with  $\{i_1, \ldots, i_h, j_1, \ldots j_l\}$ $= \{1, \ldots, i\}$ and $h+l = i$. 
		
		Similarly, $p^{a_{i-1}}p^{a_i} \hat B_{i-2}$  has $F_{i-1}$ addends of the kind  $\hat b_{i_1}\cdots \hat b_{i_h}p^{a_{j_1}}\cdots p^{a_{j_l}}$ with\\  $\{i_1, \ldots, i_h, j_1, \ldots j_l\} = \{1, \ldots, i\}$ and $h+l = i$. 
		
		Finally, we have that $\hat B_i$ has $F_{i}+ F_{i-1} = F_{i+1}$ addends of the kind  $\hat b_{i_1}\cdots \hat b_{i_h}p^{a_{j_1}}\cdots p^{a_{j_l}}$ with  $\{i_1, \ldots, i_h, j_1, \ldots j_l\} = \{1, \ldots, i\}$ and $h+l = i$. 
		
		By hypothesis we have $\lvert \hat b_i \rvert_\infty < \sqrt{\cfrac{3}{14}}\cdot \cfrac{p^a}{F_{k+1}}$, where $a = \min\{ a_i : 1 \leq i \leq k, a_i\not= 1 \}$ or $\hat b_i = 1 < p$ (if $b_i = p^{-1}$) for any $1 \leq i \leq k$, then $\lvert B_{k} \rvert_\infty < \sqrt{\frac{p}{4p + 2}}$. 
		Indeed, $B_{k} = \cfrac{\hat B_{k}}{p^{f_{k}}}$ and we have seen that $\hat B_{k}$ is a sum of $F_{k+1}$ elements of the kind $\hat b_{i_1}\cdots \hat b_{i_h}p^{a_{j_1}}\cdots p^{a_{j_l}}$ with  $\{i_1, \ldots, i_h, j_1, \ldots j_l\} = \{1, \ldots, i\}$ and $h+l = k$, that is
		\[ B_k = \sum \cfrac{\hat b_{i_1} \cdots \hat b_{i_h}}{p^{a_1} \cdots p^{a_{h}}} \]
		where there are $F_{k+1}$ addends. Thus, by hypothesis we have that any addend satisfies
		\[ \hat b_{i_1}\cdots \hat b_{i_h} < \sqrt{\cfrac{3}{14}} \cdot \cfrac{p^{a_{i_1}+\ldots+a_{i_h}}}{F_{k+1}}, \]
		from which we obtain
		\[ \lvert B_k \rvert_\infty < \sqrt{\cfrac{3}{14}} \leq \sqrt{\cfrac{p}{4p+2}} \]
		for every odd prime $p$.
		Similar arguments can be also used  to show that $\lvert A_{k} \rvert_\infty < \sqrt{\frac{p}{4p + 2}}$.
		Let us observe that, fixed an integer $k$, we can take the partial quotients $b_1, \ldots, b_k$ such that $\lvert \hat b_i \rvert_\infty < \sqrt{\cfrac{3}{14}}\cdot \cfrac{p^a}{F_{k+1}}$ in infinitely many different ways. Indeed, it is sufficient to take the minimum of the $a_i$'s sufficiently large and observe that the integers $a_i$'s can be chosen in infinitely different ways.
		Now, consider $\beta = [\overline{p^{-1}}]$, we have that $\beta$ is a root of $p \beta^2 - \beta - p = 0$. Moreover, we have
		\[ \alpha = \cfrac{\beta A_k + A_{k-1}}{\beta B_k + B_{k-1}}, \quad \beta = \cfrac{A_{k-1} - \alpha B_{k-1}}{B_k \alpha - A_k}. \]
		Thus, we obtain the minimal polynomial of $\alpha$ as
		\begin{align*} 
			&(B_kB_{k-1}-B_k^2p+B_{k-1}^2p)\alpha^2 + 
			\\
			(2A_kB_k-2A_{k-1}B_{k-1}p-&A_kB_{k-1}-A_{k-1}B_k)\alpha +(A_kA_{k-1}-A_k^2p+A_{k-1}^2p) = 0, 
		\end{align*}
		which can be also written as
		\begin{align*} 
			&(\hat B_k \hat B_{k-1}p^{a_k-1}-\hat B_k^2+\hat B_{k-1}^2p^{2a_k})\alpha^2 +
			\\
			(2\hat A_k \hat B_k &p^{a_1}- 2 \hat A_{k-1} \hat B_{k-1} p^{a_1+2a_k} - \hat A_k \hat B_{k-1}p^{a_1+a_k-1} -  \hat A_{k-1}\hat B_k p^{a_1+a_k-1})\alpha +
			\\
			&(\hat A_k \hat A_{k-1} p^{2a_1+a_k-1}-\hat A_k^2 p^{2a_1}+ \hat A_{k-1}^2 p^{2a_1+2a_k}) = 0.
		\end{align*}
		Under the condition $\lvert A_{k} \rvert_{\infty}, \lvert B_{k} \rvert_{\infty} < \sqrt{\frac{p}{4p + 2}}$, we get
		\begin{itemize}
			\item $\lvert \hat B_k \hat B_{k-1}p^{a_k-1}-\hat B_k^2+\hat B_{k-1}^2p^{2a_k} \rvert_\infty \leq p^{2(a_1 + \ldots + a_k)}$, since \\ $\lvert B_kB_{k-1} \rvert_{\infty} \cdot \cfrac{1}{p} + B_{k-1}^2 + B_k^2 \leq 1$
			
			\item $\lvert 2\hat A_k \hat B_k p^{a_1}- 2 \hat A_{k-1} \hat B_{k-1} p^{a_1+2a_k} - \hat A_k \hat B_{k-1}p^{a_1+a_k-1} - \hat A_{k-1}\hat B_k p^{a_1+a_k-1} \rvert_{\infty} \leq p^{2(a_1 + \ldots + a_k)}$, since $2\lvert A_kB_k \rvert_{\infty} + 2\lvert A_{k-1}B_{k-1} \rvert_{\infty} + \lvert A_kB_{k-1}\rvert_{\infty}\cdot \cfrac{1}{p}+\lvert A_{k-1}B_k\rvert_{\infty} \cfrac{1}{p} \leq 1$
			
			\item $\lvert\hat A_k \hat A_{k-1} p^{2a_1+a_k-1}-\hat A_k^2 p^{2a_1}+ \hat A_{k-1}^2 p^{2a_1+2a_k}\rvert_{\infty} \leq p^{2(a_1 + \ldots + a_k)}$, since \\ $\lvert A_kA_{k-1} \rvert_\infty \cdot \cfrac{1}{p} + A_k^2 + A_{k-1}^2 \leq 1$. 
		\end{itemize}
		Thus, we have $h(\alpha) \leq p^{2(a_1 + \ldots + a_k)} =  \lvert B_{k} \rvert_p^2$.
	\end{proof}

	\begin{exam}\emph{
			With the notation of Lemma \ref{lemma:h2}, for $p = 5$, $k = 2$ consider the continued fraction
			\[ \alpha = \left[0, \cfrac{4}{5^2}, -\cfrac{3}{5^3}, \overline{\cfrac{1}{5}}\;\right]. \]
			We observe that $\lvert\hat b_1 \rvert_\infty = 4$ and $\lvert\hat b_2 \rvert_\infty = 3$ are smaller than $\sqrt{\cfrac{3}{14}}\cdot \cfrac{5^2}{2} = 5.78\ldots$\;. 
			\\The minimal polynomial of $\alpha$ is $9.129.469 x^2 + 5.530.075 x - 9.713.125$ and $\lvert B_2 \rvert_p^2 = p^{10} = 9.765.625$. Clearly, we can construct infinitely many continued fractions of this kind with $k = 2$. Indeed, we can take $b_1$ and $b_2$ with a denominator consisting of any possible power of $p$ (and large values of this powers allow also more possible choices for $\hat b_1$ and $\hat b_2$).}
	\end{exam}
	
	\subsection{$p$-adic Subspace Theorem}
	We begin restating a $p$-adic version of the celebrated Roth's theorem and its generalizations due to Schmidt, Evertse, Schlickewei; see \cite{roth}. This is currently one of the main technical tools of diophantine approximation applied to archimedean continued fractions, see, for instance, \cite{AB}.

	Let us recall the  definition of the {\it Weil} (sometimes called {\it absolute}) height function $H$ of an algebraic number $x$. The absolute value $|\cdot|_v$ is the unique absolute value on $\mathbb{Q}(x)$ which restricts to the usual $v$-adic absolute value over $\mathbb{Q}$. Let also $n$ and $n_v$ be respectively the global and local degrees of $x$. Then the Weil height is defined as 
	$$H(x):= \prod_v\max\{1,|x|^{n_v}_v\}^{1/n},$$
	for $v$ running over all places of $\mathbb{Q}(x)$.
	
	\begin{rem}\label{rem:H}
		\emph{
			The relation between the naive height $h$ and the absolute height $H$ is well known. For $\alpha\in\mathbb{Q}$, the two heights coincide, on the other hand, for $\alpha\in\overline{\mathbb{Q}}$ of degree $D$, we have
			\[
			H(\alpha)\le (D+1)^{1/2}h(\alpha),\;\; h(\alpha)\le 2^D H(\alpha).
			\]
			For the proof and much more, see \cite[Part B, p.177]{silv}.}
	\end{rem}
	
	We recall the general formulation of Roth's theorem, as stated in \cite{silv}.
	
	\begin{theorem}
		Let $K$ be a number field, let $S$ be a finite set of absolute values on $K$ with each absolute value extended in some way to $\overline{K}$. Let $\alpha \in \overline{ K}$ and $\varepsilon > 0$ be given. Suppose that 
		\[\xi: S \rightarrow [0, 1] \text{ is a function satisfying } \sum_{v \in S} \xi(v) = 1.\]
		Then, there are only finitely many $\beta \in  K$ with the property that $$\lvert \alpha - \beta \rvert_v \leq \cfrac{1}{H(\beta)^{(2+\varepsilon)\xi(v)}}$$
		for all $v \in S$.
	\end{theorem}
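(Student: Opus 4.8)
The plan is to deduce this $\xi$-weighted, several-places statement from the single ``product'' inequality that lies at the heart of Roth's theorem over $K$, and then to recall how the latter is proved by the Thue--Siegel--Roth method. The reduction is elementary and is the only point at which the normalization $\sum_{v\in S}\xi(v)=1$ intervenes. Since $H(\beta)\ge1$ for every $\beta\in K$, and by Kronecker's theorem the only $\beta$ with $H(\beta)=1$ are $0$ and the finitely many roots of unity contained in $K$, we may discard those and assume $H(\beta)>1$. For each $v\in S$ the exponent $-(2+\varepsilon)\xi(v)$ is then $\le0$, so the hypothesis
\[|\alpha-\beta|_v\le H(\beta)^{-(2+\varepsilon)\xi(v)}\le 1\]
gives $\min\{1,|\alpha-\beta|_v\}=|\alpha-\beta|_v$ at every $v\in S$. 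Multiplying these inequalities over $v\in S$ and using $\sum_{v\in S}\xi(v)=1$ yields
\[\prod_{v\in S}\min\{1,|\alpha-\beta|_v\}=\prod_{v\in S}|\alpha-\beta|_v\le H(\beta)^{-(2+\varepsilon)}.\]
Hence every $\beta$ satisfying the hypotheses of the theorem also satisfies this one inequality, and it suffices to show that only finitely many $\beta\in K$ do so; note that the weights $\xi$ have now disappeared.

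I would establish the product form by the classical Thue--Siegel--Roth argument, reasoning by contradiction from the assumption that infinitely many such $\beta$ exist. The skeleton has three parts. First, an auxiliary construction: for a large number $m$ of variables and a carefully chosen vector of partial degrees $(d_1,\dots,d_m)$, one uses Siegel's lemma (a pigeonhole estimate for small integral solutions of underdetermined linear systems) to produce a nonzero polynomial $P\in\mathcal O_K[X_1,\dots,X_m]$ of controlled height vanishing to high index at the diagonal point $(\alpha,\dots,\alpha)$. Second, a lower bound: choosing $m$ of the putative approximations $\beta_1,\dots,\beta_m$ with rapidly increasing heights and expanding $P$ by Taylor's formula about $(\alpha,\dots,\alpha)$, the quality of the approximations together with the product formula forces the index of $P$ at $(\beta_1,\dots,\beta_m)$ to be bounded below. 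Third, an upper bound: Roth's lemma, the zero estimate that replaces Dyson's lemma, bounds from above the index at $(\beta_1,\dots,\beta_m)$ of any nonzero polynomial of controlled degree and height whose evaluation point has sufficiently separated and rapidly growing coordinate heights. For suitable parameters the lower bound exceeds the upper bound, which is the desired contradiction.

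The main obstacle is Roth's lemma: it is the genuinely deep combinatorial input, and the delicate interdependent choices of $m$, of the degrees $d_j$, and of the heights of the $\beta_j$ are all dictated by the exact shape of its index bound. A secondary but unavoidable difficulty here, absent from Roth's original case $K=\mathbb Q$, $S=\{\infty\}$, is the bookkeeping of the local degrees $n_v$ and the repeated use of the product formula, so that the height, the two index estimates, and the normalization of $H$ remain mutually consistent across all places of $S$; these number-field and several-places refinements are due to Mahler, LeVeque and Ridout and are subsumed in Schmidt's Subspace Theorem. For the complete argument I would follow the exposition in \cite{silv}.
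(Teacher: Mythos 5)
Your proposal is correct, but it is worth pointing out that the paper offers no argument at all for this theorem: its ``proof'' is a single citation to \cite[Theorem D.2.2]{silv}, where the weighted, several-places statement is proved exactly as formulated here. What you do differently is twofold. First, you give a genuine (and correct) reduction: using $H(\beta)\ge 1$, Kronecker's theorem to discard the finitely many $\beta$ of height one, and the normalization $\sum_{v\in S}\xi(v)=1$, you show every solution of the weighted system satisfies the single product inequality $\prod_{v\in S}\min\{1,|\alpha-\beta|_v\}\le H(\beta)^{-(2+\varepsilon)}$; this correctly isolates the only role of the weights $\xi$ and shows the theorem follows from the more commonly quoted product form of Roth's theorem over number fields (as in Bombieri--Gubler or Hindry--Silverman). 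Second, you outline the Thue--Siegel--Roth machinery behind the product form (Siegel's lemma construction, index lower bound via the approximation hypothesis and the product formula, Roth's lemma as the upper bound, contradiction for suitable parameters); this outline is structurally accurate, but it remains a sketch whose deep ingredients you explicitly defer to the same reference \cite{silv} that the paper cites. So in the end both routes rest on the identical source; what your version buys is transparency about where the hypothesis $\sum\xi(v)=1$ enters and how the weighted statement sits relative to the unweighted one, at the cost of invoking the product-form theorem as an intermediary that the reference's Theorem D.2.2 makes unnecessary.
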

	\begin{proof}
		See \cite[Theorem D.2.2]{silv}.
	\end{proof}
	
	In our first main Theorem, we need the following version of Roth's theorem as described in the next corollary.
	
	\begin{cor}\label{coroth}
		Consider $K$ a quadratic field, $S =\{ \infty, p\}$ and $\xi(\infty) = 0$, $\xi(p) = 1$. Let $\varepsilon, C > 0$, then there are finitely many $\beta\in K$ such that 
		\[    |\alpha - \beta|_p\le C H(\beta)^{-2-\epsilon}    \]
		holds. 
	\end{cor}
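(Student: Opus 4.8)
The plan is to deduce the statement from the general Roth theorem quoted above, with the prescribed weights $\xi(\infty)=0$, $\xi(p)=1$, after two adjustments: first I must absorb the constant $C$ into the exponent, and second I must account for the fact that the theorem, applied with these weights, also carries the archimedean side condition $|\alpha-\beta|_\infty\le H(\beta)^{-(2+\varepsilon)\xi(\infty)}=1$, which the corollary does not assume. Throughout I shall use Northcott's theorem, namely that for every bound $T$ there are only finitely many $\beta\in K$ with $H(\beta)\le T$; so it suffices to show that each $\beta$ satisfying the hypothesis either has bounded height or lies in a finite set.

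To absorb the constant, fix $\varepsilon'=\varepsilon/2$ and set $T_0=C^{2/\varepsilon}$. For every $\beta$ with $H(\beta)\ge T_0$ one has $C\le H(\beta)^{\varepsilon/2}$, hence $C\,H(\beta)^{-2-\varepsilon}\le H(\beta)^{-2-\varepsilon'}$. Thus, discarding the finitely many $\beta$ with $H(\beta)<T_0$, the hypothesis of the corollary forces $|\alpha-\beta|_p\le H(\beta)^{-2-\varepsilon'}=H(\beta)^{-(2+\varepsilon')\xi(p)}$. For those $\beta$ which in addition satisfy $|\alpha-\beta|_\infty\le 1=H(\beta)^{-(2+\varepsilon')\xi(\infty)}$, both Roth inequalities (with $\varepsilon'$ and the given $\xi$) hold simultaneously, so the theorem applies and leaves only finitely many such $\beta$.

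It remains to control the $\beta$ for which $|\alpha-\beta|_\infty>1$, and I expect this to be the genuine obstacle, since for them the theorem cannot be invoked directly. Here I would use a Liouville-type lower bound: for the nonzero algebraic number $\delta=\alpha-\beta$ and the chosen place above $p$ one has $|\delta|_p\ge H(\delta)^{-n_\delta/n_p}$, where $n_\delta=[\mathbb Q(\delta):\mathbb Q]$ and $n_p$ is the local degree, while on the other hand $H(\delta)=H(\alpha-\beta)\le 2H(\alpha)H(\beta)$. Combining these with $|\delta|_p\le C\,H(\beta)^{-2-\varepsilon}$ yields an inequality of the shape $H(\beta)^{2+\varepsilon}\le C\,(2H(\alpha)H(\beta))^{n_\delta}$, which bounds $H(\beta)$ precisely when the approximation exponent $2+\varepsilon$ exceeds the Liouville exponent $n_\delta$. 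This is the situation in which the corollary will be applied, where $\alpha$ is taken to be a quadratic irrational so that $\alpha\in K$: then $\delta=\alpha-\beta\in K$, whence $n_\delta\le 2<2+\varepsilon$, $H(\beta)$ is bounded, and only finitely many such $\beta$ survive by Northcott. Putting the two cases together, the set of $\beta\in K$ with $|\alpha-\beta|_p\le C\,H(\beta)^{-2-\varepsilon}$ is finite; the delicate point throughout is this archimedean-far case, where one must check that the trivial lower bound for $|\alpha-\beta|_p$ beats the imposed approximation rate.
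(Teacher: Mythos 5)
Your absorption of the constant $C$ is correct, and you are right to notice that the theorem, as literally quoted in the paper (without the $\min\{1,\cdot\}$ that appears in Hindry--Silverman's actual Theorem D.2.2), carries the archimedean side condition $|\alpha-\beta|_\infty\le H(\beta)^{-(2+\varepsilon)\xi(\infty)}=1$ when $\xi(\infty)=0$. But your treatment of the remaining case $|\alpha-\beta|_\infty>1$ has a genuine gap: the Liouville argument bounds $H(\beta)$ only when the degree $n_\delta=[\mathbb{Q}(\alpha-\beta):\mathbb{Q}]$ is smaller than $2+\varepsilon$, and you secure this by assuming $\alpha\in K$. That assumption is not in the statement, and it is exactly backwards with respect to how the corollary is used in the paper: in Theorems \ref{qper} and \ref{ooto}, $\alpha$ is the number assumed algebraic (of arbitrary degree, resp.\ degree $>2$), while it is the approximants $\beta^{(i)}$ that are quadratic irrationals lying in the fixed quadratic field $K=\mathbb{Q}([\,\overline{p^{-1}}\,])$. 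For such $\alpha$ one can have $n_\delta$ as large as $2\deg\alpha$, your inequality $H(\beta)^{2+\varepsilon}\le C\,(2H(\alpha)H(\beta))^{n_\delta}$ then bounds nothing, and the second case of your argument collapses. So what you prove is the corollary restricted to $\alpha$ of degree at most $2$, which is not the statement the paper needs.

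The repair is much simpler than what you attempted, and it is what the paper implicitly relies on (it gives no proof, treating the corollary as an immediate specialization of the theorem): either quote the reference's formulation, in which the condition at a place $v$ with $\xi(v)=0$ reads $\min\{1,|\alpha-\beta|_v\}\le 1$ and is vacuous; or, staying with the min-less version printed in the paper, apply the theorem with the singleton $S=\{p\}$ and $\xi(p)=1$ (the weights still sum to $1$), so that no archimedean condition ever arises. Combined with your correct reduction of the constant $C$ into the exponent (plus Northcott for the finitely many $\beta$ of small height), this yields the corollary for every algebraic $\alpha$ and all $\beta\in K$, with no case distinction at the archimedean place at all.
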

	
	We now state a $p$--adic version of the Subspace Theorem.
	
	\begin{theorem}\label{roth} 
		
		Let $n \ge2$ and $S=\{\infty, p\}$ contain two places of $\mathbb{Q}$. Consider $L_{1,\infty},...,L_{n,\infty}$ independent linear forms in $n$ variables with real algebraic coefficients and $L_{1,p},...,L_{n,p}$ independent linear forms in $n$ variables with $p$-adic algebraic coefficients.
		Then, for every $\epsilon > 0$ there are a finite number of proper rational subspaces $T_1, \ldots, T_r$ such that every $x \in \mathbb Z[p^{-1}]^n - \{0\}$ satisfying 
		\[
		\prod_{v\in S}
		\prod_{i\le n}|L_{i,v}(x)|_v < \max\{|x_1|_\infty,...,|x_n|_\infty\}^{-\epsilon}\max\{|x_1|_p,...,|x_n|_p\}^{-\epsilon}
		\]
		lies in one of these subspaces.   
	\end{theorem}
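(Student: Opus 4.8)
The plan is to obtain this statement as a specialization of the general $S$-adic Subspace Theorem of Schmidt--Schlickewei--Evertse (see \cite{roth}), rather than to reprove that deep result from scratch. In its standard form over a number field $K$ with a finite set of places $S$, the theorem asserts that, given for each $v\in S$ a system of $n$ linearly independent linear forms $L_{1,v},\dots,L_{n,v}$ with algebraic coefficients in the completion $\overline{K_v}$, the solutions $x$ in the ring of $S$-integers $\mathcal O_S$ to
\[
\prod_{v\in S}\prod_{i\le n}|L_{i,v}(x)|_v\le H(x)^{-\epsilon}
\]
lie in finitely many proper rational subspaces, where $H$ is the projective Weil height $H(x)=\prod_v\max_i|x_i|_v$ and the absolute values are normalized so that the product formula $\prod_v|a|_v=1$ holds on $K^\times$. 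Specializing to $K=\mathbb Q$, $S=\{\infty,p\}$ and $\mathcal O_S=\mathbb Z[p^{-1}]$ recovers exactly the setting of the statement, so it remains only to compare the two right-hand sides.

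The key step is a height comparison. For any $x\in\mathbb Z[p^{-1}]^n\setminus\{0\}$ one has $|x_i|_\ell\le 1$ for every prime $\ell\neq p$, since the coordinates are $\ell$-integral; hence $\max_i|x_i|_\ell\le 1$ for all such $\ell$, with only finitely many of these maxima differing from $1$. Splitting the Weil height into its archimedean, its $p$-adic, and its remaining non-archimedean contributions, one obtains
\[
H(x)=\max_i|x_i|_\infty\cdot\max_i|x_i|_p\cdot\prod_{\ell\neq p}\max_i|x_i|_\ell\le \max_i|x_i|_\infty\cdot\max_i|x_i|_p.
\]
Raising to the power $-\epsilon$ reverses the inequality, giving $\max_i|x_i|_\infty^{-\epsilon}\,\max_i|x_i|_p^{-\epsilon}\le H(x)^{-\epsilon}$.

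Finally I would combine the two inequalities: any $x\in\mathbb Z[p^{-1}]^n\setminus\{0\}$ satisfying the strict bound in the statement satisfies
\[
\prod_{v\in S}\prod_{i\le n}|L_{i,v}(x)|_v<\max_i|x_i|_\infty^{-\epsilon}\max_i|x_i|_p^{-\epsilon}\le H(x)^{-\epsilon},
\]
hence falls under the hypothesis of the standard theorem and therefore lies in one of its finitely many proper subspaces; taking $T_1,\dots,T_r$ to be these subspaces concludes the reduction. The only real subtlety is bookkeeping rather than mathematics: one must fix the normalization $|p|_p=p^{-1}$ so that the product formula holds on $\mathbb Q$, and match the precise shape (and the exponent on $H$) of the version quoted from \cite{roth} or \cite{silv}, rescaling $\epsilon$ if that source states the bound as $H(x)^{-n-\epsilon}$ or with a different normalization of the forms. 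The genuinely deep content --- the proof of the Subspace Theorem via the geometry of numbers and Roth-type index arguments --- is imported wholesale and not reproved here.
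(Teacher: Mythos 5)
Your proposal is correct and takes essentially the same route as the paper: the paper's entire proof is a citation of the $S$-adic Subspace Theorem (\cite[Corollary 7.2.5]{Bomb}), which is exactly the deep result you import, and your height comparison $H(x)\le\max_i|x_i|_\infty\cdot\max_i|x_i|_p$ for $x\in\mathbb Z[p^{-1}]^n\setminus\{0\}$ is just the routine bookkeeping needed to match that formulation to the one stated here.
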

	
	\begin{proof}
		See \cite[Corollary 7.2.5]{Bomb}
	\end{proof}
	
	We exploit Theorem \ref{roth} to prove the following results which will be applied for proving part (b) of Theorem \ref{thm:A}.
	
	\begin{lemma} \label{lemma:appl-sub}
		Let $\alpha, \beta \in \mathbb Q_p$ be algebraic and $1, \alpha, \beta$ linearly independent over $\mathbb Q$. Then for every $\epsilon > 0$ there are only finitely many $(x, y, z) \in \mathbb Z[p^{-1}]^3$, with $xyz \not=0$ and $|z|_p = \max\{ |x|_p, |y|_p, |z|_p \}$, such that
		$$|(z\alpha - x)(z\beta - y)|_p |z|_p^{2\epsilon + 7/4} < 1$$
		and $|x|_\infty < |x|_p^{1/4}$, $|y|_\infty < |y|_p^{1/4}$, $|z|_\infty < |z|_p^{1/4}$, $\min\{|x|_\infty,|y|_\infty\} > C$, for some constant $C > 0$, and $\min\{|x|_p,|y|_p\} > 1$.
	\end{lemma}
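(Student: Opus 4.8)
The plan is to feed the hypotheses into the $p$-adic Subspace Theorem (Theorem \ref{roth}) with $n=3$, $S=\{\infty,p\}$, reading the triple $(x,y,z)$ as the integral point. At the place $p$ I would use the linear forms $L_{1,p}=Z\alpha-X$, $L_{2,p}=Z\beta-Y$, $L_{3,p}=Z$: their coefficient matrix is upper triangular with determinant $1$ and has $p$-adic algebraic entries, so they are independent. At $\infty$ I would simply take the coordinate forms $L_{1,\infty}=X$, $L_{2,\infty}=Y$, $L_{3,\infty}=Z$, which are independent with rational (hence real algebraic) coefficients.

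First I would verify the Subspace inequality. The $p$-adic product equals $|z\alpha-x|_p\,|z\beta-y|_p\,|z|_p$, which by the hypothesis $|(z\alpha-x)(z\beta-y)|_p<|z|_p^{-2\epsilon-7/4}$ is $<|z|_p^{-2\epsilon-3/4}$. For the archimedean product, the conditions $|x|_\infty<|x|_p^{1/4}\le|z|_p^{1/4}$ and the analogues for $y,z$ (using $|z|_p=\max\{|x|_p,|y|_p,|z|_p\}$) give $|x|_\infty|y|_\infty|z|_\infty<|z|_p^{3/4}$, so the full double product is $<|z|_p^{-2\epsilon}$. A small but crucial observation is that $|z|_p\ge1$: a nonzero $z\in\mathbb Z[p^{-1}]$ with $|z|_p<1$ is a multiple of $p$ in $\mathbb Z$, forcing $|z|_\infty\ge p$ and contradicting $|z|_\infty<|z|_p^{1/4}\le1$. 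Since $|z|_p\ge1$, the bounds $|x|_\infty,|y|_\infty,|z|_\infty<|z|_p^{1/4}\le|z|_p$ show that the right-hand side $\max\{|x|_\infty,|y|_\infty,|z|_\infty\}^{-\epsilon}|z|_p^{-\epsilon}$ exceeds $|z|_p^{-2\epsilon}$, so Theorem \ref{roth} applies and all solutions lie in a finite union of proper rational subspaces.

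It then remains to bound the admissible triples inside a fixed subspace $aX+bY+cZ=0$, with $(a,b,c)\in\mathbb Z^3\setminus\{0\}$; suppose for contradiction there are infinitely many. Put $\gamma=a\alpha+b\beta+c$: the linear independence of $1,\alpha,\beta$ over $\mathbb Q$ gives $\gamma\neq0$, and the relation rewrites as $z\gamma=a(z\alpha-x)+b(z\beta-y)$, whence $\max\{|z\alpha-x|_p,|z\beta-y|_p\}\ge c_1|z|_p$ with $c_1=|\gamma|_p/\max\{|a|_p,|b|_p\}>0$. Combining this with $|z\alpha-x|_p\,|z\beta-y|_p<|z|_p^{-2\epsilon-7/4}$, the smaller factor is $<c_1^{-1}|z|_p^{-2\epsilon-11/4}$; passing to a subsequence where, say, $|z\beta-y|_p$ is the small one yields $|\beta-y/z|_p<c_1^{-1}|z|_p^{-2\epsilon-15/4}$, i.e.\ $y/z\in\mathbb Q\subseteq\mathbb Q(\beta)$ is an extremely good $p$-adic approximation of $\beta$ (and symmetrically $x/z$ approximates $\alpha$ in the complementary case).

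The step I expect to be the main obstacle is the height bookkeeping needed to invoke Roth (Corollary \ref{coroth}, applied in $\mathbb Q(\beta)$) over $\mathbb Z[p^{-1}]$. Writing $q=y/z$ in lowest terms and using $|y|_p\le|z|_p$ (so $p$ does not divide the denominator of $q$) together with the product formula, one checks that the denominator of $q$ is at most $|z|_\infty|z|_p$ and then that $H(q)<|z|_p^{5/4}$; this sharp exponent, rather than the lossy $|z|_p^{5/2}$ coming from $H(y/z)\le H(y)H(z)$, is exactly what makes the estimate close, since $\tfrac{15}{4}>2\cdot\tfrac54$ gives $|\beta-q|_p<c_1^{-1}|z|_p^{-2\epsilon-15/4}\le C\,H(q)^{-2-\epsilon'}$ for suitable $C,\epsilon'>0$. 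By Corollary \ref{coroth} only finitely many such $q$ occur, and a fixed ratio $q=y/z$ together with the subspace relation forces all three ratios, so the admissible triples mapping to a given $q$ differ by scalars whose $p$-adic absolute value is bounded above (by the product condition) and below (by $|z|_p\ge1$), and whose archimedean absolute value is bounded (by $|z|_\infty<|z|_p^{1/4}$): hence there are finitely many. The same reasoning disposes of the $x/z$ case, contradicting the assumed infinitude and completing the proof.
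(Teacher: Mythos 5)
Your proof is correct, but after the shared first step it takes a genuinely different route from the paper's. Both arguments introduce the same six linear forms, check (as you do via $|z|_p\geq 1$ and the three $1/4$-conditions) that every admissible triple satisfies the inequality of Theorem \ref{roth}, and conclude that all solutions lie in finitely many proper rational subspaces. The divergence is in how solutions inside a fixed subspace are killed. The paper substitutes $x=Ay+Bz$ and applies the Subspace Theorem a \emph{second} time, now effectively in the two variables $(y,z)$, reducing to lines $z=Cy$ on which the inequality collapses to $(|y|_\infty|y|_p)^{2+\epsilon}K<1$, which has finitely many solutions by the product formula. You instead use the linear independence of $1,\alpha,\beta$ directly: writing the subspace as $ax+by+cz=0$ and setting $\gamma=a\alpha+b\beta+c\neq 0$, the identity $z\gamma=a(z\alpha-x)+b(z\beta-y)$ forces one of the two $p$-adic forms to be $\geq c_1|z|_p$, hence the other yields a rational approximation $y/z$ (or $x/z$) to $\beta$ (or $\alpha$) of quality $|z|_p^{-2\epsilon-15/4}$; your sharp estimate $H(y/z)<|z|_p^{5/4}$ is correct (it genuinely needs $|y|_p\leq|z|_p$ so that $p$ does not enter the denominator --- the lossy bound $|z|_p^{5/2}$ would not close, as you observe, since $15/4<5$), and converts this into quality $H^{-3-\epsilon'}$, so Roth applies and a bounded-scalar count finishes each fixed ratio. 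What the paper's iterated-Subspace route buys is uniformity: no case analysis on which factor is small and no height bookkeeping. What your route buys is economy of tools (Subspace once, then only Roth) and transparency about where the exponents $7/4$ and $1/4$ are actually spent, namely in the margin $15/4>2\cdot\tfrac{5}{4}$.

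Two details you should make explicit. First, when $a=0$ the subspace relation $by+cz=0$ fixes $y/z$ but \emph{not} $x/z$, so your sentence ``a fixed ratio $q=y/z$ together with the subspace relation forces all three ratios'' fails literally; however in that case $|z\beta-y|_p=c_1|z|_p$ exactly, so the product hypothesis forces $|z\alpha-x|_p$ to be the small factor, Roth (applied to $\alpha$) fixes $x/z$, and the relation fixes $y/z$ --- i.e.\ your ``symmetric case'' is the operative one there, not an alternative. Second, Corollary \ref{coroth} is stated for a quadratic field $K$, while your approximants are rational and $\mathbb Q(\beta)$ need not be quadratic; this is harmless (take any quadratic $K\supset\mathbb Q$ containing the approximants, or invoke the general Roth theorem with $K=\mathbb Q$, $S=\{p\}$), but as written the citation does not quite match its hypotheses.
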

	\begin{proof}
		We introduce the following linearly independent forms in three variables
		\begin{equation*}
			L_{1,p}(x,y,z)=\alpha z - x \quad L_{2,p}(x,y,z)=\beta z - y, \quad
			L_{3,p}(x,y,z)= z
		\end{equation*}
		and
		\begin{equation*}
			L_{1,\infty}(x,y,z) = x, \quad  L_{2,\infty}(x,y,z) = y, \quad L_{3, \infty}(x,y,z) = z.
		\end{equation*}
		By Theorem \ref{roth}, the solutions $(x, y, z) \in \mathbb Z[p^{-1}]^3$ of
		\begin{equation} \label{eq:1}
			|(z\alpha - x)(z\beta - y)z|_p|xyz|_\infty\max(|x|_\infty, |y|_\infty, |z|_\infty)^\epsilon\max(|x|_p, |y|_p, |z|_p)^\epsilon < 1
		\end{equation}
		lie in a finite number of rational subspaces. Assume we have a solution $(x, y, z)$ in a fixed subspace  $x = Ay + Bz$ (with $A, B \in \mathbb Q$). Then formula  \eqref{eq:1} becomes
		\begin{equation} \label{eq:2}
			|(z(\alpha-B) - Ay)(z\beta -y) z|_p|(Ay+Bz)yz|\max(|Ay+Bz|, |y|, |z|)^\epsilon\max(|Ay+Bz|_p, |y|_p, |z|_p)^\epsilon < 1,
		\end{equation} 
		where  we denote by $|\cdot|$ what before was denoted by $|\cdot|_\infty$.
		
		Now observe that the solutions $(y,z) \in \mathbb Z[p^{-1}]^2$ of
		\begin{equation} \label{eq:3}
			|(z(\alpha-B) - Ay)(z\beta - y)|_p|(Ay+Bz)y|_\infty\max(|y|_\infty, |z|_\infty)^\epsilon\max(|y|_p, |z|_p)^\epsilon < 1
		\end{equation}
		lie, by Theorem \ref{roth}, in a finite number of rational subspaces, so that we can consider $z = Dy$, for $D \in \mathbb Q$, $D(A+BD)\neq0$ (recall that we discard solutions such that $D(A+BD)y^2=xz$ vanishes). Thus \eqref{eq:3} becomes
		\[|y|_p^{\epsilon + 2} |y|_\infty^{\epsilon+2} K < 1,\]
		for some constant $K \in \mathbb Q$, $K\neq0$.
		This inequality has a finite number of solutions in $\mathbb Z[p^{-1}]$ (given that $|y|_\infty > C$ and $|y|_p > 1$, with $C > 0$). Hence also \eqref{eq:3} has a finite number of solutions and thus the same holds for \eqref{eq:2} as well. Indeed, every solution of $\eqref{eq:2}$ is a solution of $\eqref{eq:3}$ (caveat, the converse may fail), since $z\in\mathbb Z[p^{-1}]$ implies $|z|_\infty |z|_p\geq 1$. 
		
		We can apply the same reasoning to solutions of \eqref{eq:1} lying in a subspace of the form $y=Ax+Bz$. This covers all cases we are interested in, because any other solution of \eqref{eq:1} would lie in the hyperplane $z=0$. Thus we have proved that there are finitely many solutions of \eqref{eq:1} in $\mathbb Z[p^{-1}]^3$ satisfying $xyz\neq0$ and $\min\{|x|_\infty,|y|_\infty\} > C$,  $\min\{|x|_p,|y|_p\} > 1$. 
		
		Now we use the hypothesis $|z|_p = \max(|x|_p, |y|_p, |z|_p)$. The left side of \eqref{eq:1} becomes
		\[F(x,y,z) := |(z \alpha - x)(z \beta - y)|_p|z|_p^{\epsilon + 1} |xyz|_\infty \max(|x|_\infty, |y|_\infty, |z|_\infty)^{\epsilon}\]
		and we have
		\[|(z\alpha - x)(z \beta - y)|_p|z|_p^{2\epsilon + 7/4} > F(x,y,z)\]
		for every $(x,y,z) \in \mathbb Z[p^{-1}]^3$ which satisfies the additional conditions $|x|_\infty < |x|_p^{1/4}$, $|y|_\infty < |y|_p^{1/4}$, $|z|_\infty < |z|_p^{1/4}$. This completes the proof.
	\end{proof}
	
	\begin{cor} \label{cor:1}
		Let $\alpha, \beta \in \mathbb Q_p$ be algebraic and $1, \alpha, \beta$ linearly independent over $\mathbb Q$. Then for every $\epsilon >0$ there are only finitely many $(x,y,z) \in \mathbb Z[p^{-1}]^3$, with $xyz \not=0$, such that
		\[ \left| \alpha - \frac{x}{z}  \right|_p < |z|_p^{-\epsilon - 15/8}, \quad \left| \beta - \frac{y}{z}\right|_p < |z|_p^{-\epsilon - 15/8}\]
		and $|x|_\infty < |x|_p^{1/4}$, $|y|_\infty < |y|_p^{1/4}$, $|z|_\infty < |z|_p^{1/4}$, $\min\{|x|_\infty,|y|_\infty\} > C$ (for some constant $C > 0$), $\min\{|x|_p,|y|_p\} > 1$, with $|z|_p = \max\{ |x|_p, |y|_p, |z|_p \}$.
	\end{cor}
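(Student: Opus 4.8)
The plan is to deduce the statement directly from Lemma~\ref{lemma:appl-sub} by translating the two $p$-adic approximation inequalities into a single bound on the product of the linear forms $z\alpha-x$ and $z\beta-y$. The key observation is the elementary identity
\[
\left|\alpha-\frac{x}{z}\right|_p=\frac{|z\alpha-x|_p}{|z|_p},\qquad
\left|\beta-\frac{y}{z}\right|_p=\frac{|z\beta-y|_p}{|z|_p},
\]
valid for any $(x,y,z)\in\mathbb Z[p^{-1}]^3$ with $z\neq0$.

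First I would take a triple $(x,y,z)$ satisfying all the hypotheses of the corollary. Using the identity above, the assumption $|\alpha-x/z|_p<|z|_p^{-\epsilon-15/8}$ rewrites as $|z\alpha-x|_p<|z|_p^{1-\epsilon-15/8}=|z|_p^{-\epsilon-7/8}$, and similarly $|z\beta-y|_p<|z|_p^{-\epsilon-7/8}$. Multiplying these two bounds gives
\[
|(z\alpha-x)(z\beta-y)|_p<|z|_p^{-2\epsilon-7/4},
\]
that is, $|(z\alpha-x)(z\beta-y)|_p\,|z|_p^{2\epsilon+7/4}<1$.

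This is precisely the inequality appearing in Lemma~\ref{lemma:appl-sub}, and the remaining hypotheses --- namely $z\neq0$, the archimedean conditions $|x|_\infty<|x|_p^{1/4}$, $|y|_\infty<|y|_p^{1/4}$, $|z|_\infty<|z|_p^{1/4}$, and the normalization $|z|_p=\max\{|x|_p,|y|_p,|z|_p\}$ --- are shared verbatim by the two statements. Hence every triple satisfying the hypotheses of the corollary also satisfies those of Lemma~\ref{lemma:appl-sub} (with the same $\epsilon$), so the solution set of the corollary is contained in a finite set and is therefore itself finite.

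There is essentially no hard step here: the only thing to watch is the bookkeeping of the exponents, where the factor $|z|_p^{-1}$ coming from each denominator shifts $-\epsilon-15/8$ to $-\epsilon-7/8$, and the product of the two shifted exponents reproduces exactly the exponent $-2\epsilon-7/4$ built into the lemma. The nontrivial content has already been absorbed into Lemma~\ref{lemma:appl-sub} (and ultimately into the Subspace Theorem); the corollary is merely the reformulation tailored to $p$-adic rational approximation that will be convenient in the transcendence proofs of part~(b).
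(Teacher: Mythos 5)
Your proposal is correct and follows exactly the paper's own argument: the paper's proof of this corollary is the one-line observation that any triple satisfying its hypotheses also satisfies those of Lemma~\ref{lemma:appl-sub}, which is precisely your reduction. Your explicit exponent bookkeeping ($|\alpha-x/z|_p=|z\alpha-x|_p/|z|_p$, shifting $-\epsilon-15/8$ to $-\epsilon-7/8$ and multiplying to get $-2\epsilon-7/4$) is accurate and simply spells out what the paper leaves implicit.
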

	\begin{proof}
		Every triple $(x,y,z) \in \mathbb Z[p^{-1}]^3$ satisfying the hypotheses of the Corollary satisfies also the hypotheses of Lemma \ref{lemma:appl-sub}. Therefore there are finitely many such triples.
	\end{proof}
	
	\begin{cor}\label{coschmidt}
		Let $\alpha\in\mathbb Q_p$ be non rational and non--quadratic. Assume there exists a real number $\delta > 15/8$ and infinitely many $(x,y,z)\in\mathbb Z[p^{-1}]^3$, with $xyz \not=0$ and $|z|_p = \max\{ |x|_p, |y|_p, |z|_p \}$, such that
		\[
		\max\{|\alpha-x/z|_p, |\alpha^2-y/z|_p \}<\lvert z \rvert_p^{-\delta},
		\]
		and $|x|_\infty < |x|_p^{1/4}$, $|y|_\infty < |y|_p^{1/4}$, $|z|_\infty < |z|_p^{1/4}$, $\min\{|x|_\infty,|y|_\infty\} > C$ (for some constant $C > 0$), $\min\{|x|_p,|y|_p\} > 1$. Then $\alpha$ is transcendental.
	\end{cor}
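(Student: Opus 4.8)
The plan is to argue by contradiction and reduce everything to Corollary \ref{cor:1} with the choice $\beta=\alpha^2$. Suppose $\alpha$ were algebraic. Since by hypothesis it is neither rational nor quadratic, its degree over $\mathbb Q$ is at least $3$; hence no nontrivial relation $a+b\alpha+c\alpha^2=0$ with $a,b,c\in\mathbb Q$ can hold, i.e.\ $1,\alpha,\alpha^2$ are linearly independent over $\mathbb Q$. Setting $\beta:=\alpha^2$, which is again algebraic, the pair $(\alpha,\beta)$ then satisfies exactly the standing hypotheses of Corollary \ref{cor:1}.

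I would then match the exponents. Putting $\epsilon:=\delta-15/8>0$, the single inequality $\max\{|\alpha-x/z|_p,\,|\alpha^2-y/z|_p\}<|z|_p^{-\delta}$ splits into
\[\left|\alpha-\frac{x}{z}\right|_p<|z|_p^{-\epsilon-15/8},\qquad \left|\beta-\frac{y}{z}\right|_p<|z|_p^{-\epsilon-15/8},\]
which are precisely the approximation conditions in Corollary \ref{cor:1}. The remaining hypotheses there --- namely $z\neq0$, the dominance condition $|z|_p=\max\{|x|_p,|y|_p,|z|_p\}$, and the three archimedean bounds $|x|_\infty<|x|_p^{1/4}$, $|y|_\infty<|y|_p^{1/4}$, $|z|_\infty<|z|_p^{1/4}$ --- coincide verbatim with those imposed on our triples. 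Hence Corollary \ref{cor:1}, applied to $\alpha$ and $\beta=\alpha^2$ with this $\epsilon$, forces only finitely many such triples to exist, contradicting the assumption of infinitely many. Therefore $\alpha$ must be transcendental.

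The argument is essentially a repackaging of Corollary \ref{cor:1}, so I do not anticipate a genuine obstacle. The only step needing care is the verification that $1,\alpha,\alpha^2$ are $\mathbb Q$-linearly independent: this is exactly where the hypotheses ``non rational'' and ``non quadratic'' enter, and it is what legitimizes taking $\beta=\alpha^2$. One should also keep the exponent bookkeeping straight, checking that $\delta=\epsilon+15/8$ is consistent with the threshold $\delta>15/8$ so that $\epsilon$ is genuinely positive; this is the point where the specific constant $15/8$ propagated from the Subspace Theorem application is used.
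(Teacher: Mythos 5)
Your proposal is correct and follows exactly the paper's own argument: assume $\alpha$ algebraic, use non-rationality and non-quadraticity to get $\mathbb Q$-linear independence of $1,\alpha,\alpha^2$, and apply Corollary \ref{cor:1} with $\beta=\alpha^2$ and $\epsilon=\delta-15/8>0$ to contradict the existence of infinitely many triples. The paper states this in one line; your write-up merely makes the exponent bookkeeping explicit.
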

	
	\begin{proof}
		Let us suppose $\alpha$ algebraic, non--rational and non--quadratic. Then $1, \alpha, \alpha^2$ are linearly independent over $\mathbb Q$ and the thesis follows by Corollary \ref{cor:1}.
	\end{proof}

	\section{Main results}
	\label{sec:main}
	
	In this section, we prove three results on the transcendence of Browkin $p$--adic continued fractions.
	We state the following hypothesis which contains the conditions that must be satisfied by the partial quotients in order that Lemma \ref{lemma:h2} holds. 
	
	\begin{hyp} \label{hyp:1}
		Let $(b_1, \ldots, b_k)$ be a finite sequence such that $b_i = \cfrac{\hat b_i}{p^{a_i}}$, with $a_i, \hat b_i \in \mathbb Z$, $a_i > 0$, $|\hat b_i|_\infty < \sqrt{\cfrac{3}{14}} \cdot \cfrac{p^a}{F_{k+1}}$ (for the $b_i \not= p^{-1}$), where $a = \min \{ a_i: 1 \leq i \leq k, a_i\not=1 \}$ and $(F_i)_{i \geq 0}$ the Fibonacci sequence.
	\end{hyp}
	
	\begin{theorem} \label{qper}
		Let $\alpha = [0, b_1, b_2, b_3, \ldots]$ be a non--periodic Browkin $p$--adic continued fraction such that $(b_i)$ is bounded in $\mathbb Q_p$, with $D=\max_i\{|b_i|_p\}$ and let $(n_i)_{i \geq 0}$, $(k_i)_{i \geq 0}$, $(\lambda_i)_{i \geq 0}$ be sequences of positive integers such that 
		\begin{itemize}
			\item the $k_i$'s are bounded;
			\item $n_{i+1} \geq n_i + \lambda_i k_i$ and there exists $C > 2\frac{\log D}{\log p} - 1$ such that $\lambda_i > C n_i$ for all $i$ sufficiently large;
			\item $b_{n_i} = \ldots = b_{n_i + \lambda_i k_i -1} = p^{-1}$ for every $i$; 
			\item the finite sequence $(b_1, \ldots, b_{n_{i-1}})$ satisfies Hypothesis \ref{hyp:1} for every $i\geq 1$.
		\end{itemize}
		Then $\alpha$ is transcendental.
	\end{theorem}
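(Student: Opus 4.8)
The plan is to approximate $\alpha$ by cutting off each long block of $p^{-1}$'s and continuing periodically. For each $i$ put
\[
\alpha_i := [0, b_1, \ldots, b_{n_i-1}, \overline{p^{-1}}\,], \qquad \beta := [\overline{p^{-1}}\,],
\]
where, as computed in the proof of Lemma \ref{lemma:h2}, $\beta$ is the root of $p\beta^2-\beta-p=0$ lying in $\mathbb Q_p$. The decisive point is that every $\alpha_i$ lies in the \emph{single} quadratic field $K:=\mathbb Q(\beta)=\mathbb Q(\sqrt{1+4p^2})$: by \eqref{eq:id-base},
\[
\alpha_i=\frac{\beta A_{n_i-1}+A_{n_i-2}}{\beta B_{n_i-1}+B_{n_i-2}},
\]
a Möbius transform of $\beta$ with rational coefficients and determinant $A_{n_i-1}B_{n_i-2}-A_{n_i-2}B_{n_i-1}=\pm1\neq0$, so $\alpha_i\in K\setminus\mathbb Q$. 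This is exactly what lets me invoke the fixed-field form of Roth's theorem (Corollary \ref{coroth}) instead of a full approximation-by-quadratics statement.

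Next I would produce the two competing estimates. Since $b_{n_i}=\cdots=b_{n_i+\lambda_ik_i-1}=p^{-1}$, the expansions of $\alpha$ and $\alpha_i$ agree in their first $n_i+\lambda_ik_i$ partial quotients, so Lemma \ref{1} yields
\[
|\alpha-\alpha_i|_p<|B_{n_i+\lambda_ik_i-1}|_p^{-2}.
\]
On the other hand, the assumption that the initial segment $(b_1,\ldots,b_{n_i-1})$ satisfies Hypothesis \ref{hyp:1} makes Lemma \ref{lemma:h2} applicable to $\alpha_i$, giving $h(\alpha_i)\le|B_{n_i-1}|_p^2$ and hence, by Remark \ref{rem:H} (degree at most $2$), $H(\alpha_i)\le\sqrt3\,|B_{n_i-1}|_p^2$.

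It remains to compare exponents. By Proposition \ref{prop:varie}(v) we have $|B_n|_p=\prod_{j=1}^n|b_j|_p$ with $p\le|b_j|_p\le D$; in particular $|B_{n_i-1}|_p\le D^{\,n_i-1}$, while the block contributes exactly $\lambda_ik_i$ factors equal to $p$, so that
\[
|B_{n_i+\lambda_ik_i-1}|_p=|B_{n_i-1}|_p\,p^{\lambda_ik_i}.
\]
Feeding this into the two bounds and passing to $\log_p$, the requirement that the approximation exponent of $\alpha_i$ exceed $2$ reduces to an inequality essentially of the form $\lambda_ik_i>\log_p|B_{n_i-1}|_p$, and the assumption $\lambda_i>Cn_i$ with $C>2\frac{\log D}{\log p}-1$ guarantees it for all large $i$; thus $|\alpha-\alpha_i|_p\le C_0\,H(\alpha_i)^{-2-\epsilon}$ for a fixed $\epsilon>0$ and some fixed $C_0>0$ (the constant $\sqrt3$ and the $p$-powers being harmless, since $C_0$ may be taken as large as we wish in Corollary \ref{coroth}). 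Finally, $|\alpha-\alpha_i|_p\to0$ together with $\alpha\notin\mathbb Q$ (non-periodicity) shows the $\alpha_i$ take infinitely many distinct values in $K$; were $\alpha$ algebraic, this would contradict the finiteness asserted by Corollary \ref{coroth}, so $\alpha$ is transcendental.

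The main obstacle is twofold: the \emph{conceptual} step of noticing that all the approximants sit in one fixed quadratic field (without which one would be forced into the heavier machinery of the Subspace Theorem), and the \emph{bookkeeping} step of balancing the numerator growth $p^{\lambda_ik_i}$ of the error against the height $|B_{n_i-1}|_p^2$, which is where the precise threshold on $C$ in terms of $D=\max_i|b_i|_p$ enters.
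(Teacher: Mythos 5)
Your proposal follows essentially the same route as the paper's proof: the same approximants $[0,b_1,\ldots,b_{n_i-1},\overline{p^{-1}}\,]$, the same two estimates (Lemma \ref{1} for the approximation from above, Lemma \ref{lemma:h2} together with Remark \ref{rem:H} for the height), and the same contradiction with Corollary \ref{coroth} driven by comparing $p^{\lambda_i k_i}$ against $|B_{n_i-1}|_p^2\le D^{2(n_i-1)}$. The only differences are presentational: your explicit verification that all approximants lie in the single quadratic field $\mathbb Q(\sqrt{1+4p^2})$ (which Corollary \ref{coroth}, stated for a fixed quadratic field, implicitly requires) is left unstated in the paper, and your direct check of the exponent inequality replaces the paper's argument by contradiction via an auxiliary exponent $\omega>2$, but the substance is identical.
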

	
	\begin{proof}
		Let us suppose that $\alpha$ is an algebraic number, we define infinitely many irrational numbers of the kind
		\[ \beta^{(i)} = [0, b_1, \ldots, b_{n_i-1}, \overline{p^{-1}}], \]
		such that $h(\beta^{(i)}) \leq \lvert B_{n_i-1} \rvert_p^2$. The existence of infinitely many irrational numbers of this kind is ensured by Lemma \ref{lemma:h2} (taking $k = n_{i-1}$).
		By construction, the $\beta^{(i)}$ have the first $n_i+k_i\lambda_i$ partial quotients equal to the ones of $\alpha$, thus, by Lemma \ref{1}, we have
		\begin{equation} \label{eq:first-dis}
			\lvert \alpha - \beta^{(i)} \rvert_p < \lvert B_{n_i+k_i\lambda_i - 1} \rvert_p^{-2}.
		\end{equation}
		Moreover, we have
		\begin{equation}  \label{eq:second-dis}
			\lvert \alpha - \beta^{(i)} \rvert_p > \lvert B_{n_i - 1} \rvert_p^{-2\omega}
		\end{equation}
		with $\omega > 2$. Indeed, if $\lvert \alpha - \beta^{(i)} \rvert_p \leq \lvert B_{n_i - 1} \rvert_p^{-2\omega}$, then by Remark \ref{rem:H}, we have 
		\[ \lvert \alpha - \beta^{(i)} \rvert_p \leq h(\beta^{(i)})^{-\omega} \leq  C H(\beta^{(i)})^{-\omega} \]
		with $C > 0$, in contradiction with Corollary \ref{coroth}. Thus, using \eqref{eq:first-dis} and \eqref{eq:second-dis}, we obtain
		\begin{equation}  \label{eq:w}
			\lvert B_{n_i+k_i\lambda_i - 1} \rvert_p < \lvert B_{n_i-1} \rvert_p^{\omega}. 
		\end{equation}
		Since $|b_i|_p$ is bounded by $D$, we have $p \leq |b_i|_p \leq D$ for $i \geq 1$, from which we get $p^i \leq \lvert B_i \rvert_p \leq D^i$.
		Hence, considering $k_i$ bounded and remembering point (v) of Proposition \ref{prop:varie}, in order that \eqref{eq:w} holds, we must have
		\( \frac{\lambda_i}{n_i} < w \frac{\log D}{\log p} - 1. \)
		On the other hand, by hypothesis we have $\frac{\lambda_i}{n_i} > C$ and taking $$\delta = C - \frac{2 \log D}{\log p} + 1$$ we should have
		\[ 2 + \frac{\log p}{\log D}\delta < \omega \]
		for any $\omega > 2$, where $\delta >0$, which is not possible.
	\end{proof}
	
	\begin{theorem} \label{ooto}
		Let $\alpha = [0, b_1, b_2, b_3, \ldots]$ be a non--periodic Browkin $p$--adic continued fraction such that $(b_i)$ is bounded in $\mathbb Q_p$, with $D=\max_i\{|b_i|_p\}$ and let $(n_i)_{i \geq 0}$, $(k_i)_{i \geq 0}$, $(\lambda_i)_{i \geq 0}$ be sequences of positive integers such that 
		\begin{itemize}
			\item  the $k_i$'s are bounded;
			\item $n_{i+1} \geq n_i + \lambda_i k_i$ and there exists $C > 4\frac{\log D}{\log p} - 1$ such that $\lambda_i > C n_i$ for all $i$ sufficiently large;
			\item $b_{h+k_i} = b_{h}$, for $n_i \leq h \leq n_i + (\lambda_i-1)k_i - 1$, for every $i$;
			\item the finite sequence $(b_1, \ldots b_{n_i})$ satisfies Hypothesis \ref{hyp:1} for every $i$.
		\end{itemize}
		Then $\alpha$ is transcendental or a quadratic irrational.
	\end{theorem}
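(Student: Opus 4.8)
The plan is to follow the scheme of Theorem \ref{qper} as closely as possible, replacing the single repeated digit $p^{-1}$ by the repeated block $(b_{n_i},\dots,b_{n_i+k_i-1})$. Arguing by contraposition, I assume that $\alpha$ is algebraic of degree $\ge 3$ (it is automatically irrational, being non-periodic, and not rational since Browkin expansions of rationals are finite) and aim at a contradiction. The approximants are now the purely periodic tails
\[
\beta^{(i)}=[0,b_1,\dots,b_{n_i-1},\overline{b_{n_i},\dots,b_{n_i+k_i-1}}],
\]
which are quadratic irrationals: periodicity forces a quadratic relation exactly as in the derivation of \eqref{eq:poly}, and the infinite expansion forbids rationality. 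By the third hypothesis $b_{h+k_i}=b_h$, the sequence $\beta^{(i)}$ shares with $\alpha$ its first $n_i+\lambda_ik_i$ partial quotients, so Lemma \ref{1} gives the upper bound $|\alpha-\beta^{(i)}|_p<|B_{n_i+\lambda_ik_i-1}|_p^{-2}$.

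Two points require genuinely new input compared with Theorem \ref{qper}. First, the height: since the period is an arbitrary block rather than $\overline{p^{-1}}$, I cannot invoke Lemma \ref{lemma:h2} and must use instead the general estimate of Lemma \ref{lemma:h1}, giving $h(\beta^{(i)})\le \tfrac{8}{p^2}|B_{n_i+k_i-1}|_p^2|B_{n_i-1}|_p^2$. Because the $k_i$ are bounded by some $K$ and $|b_j|_p\le D$, point (v) of Proposition \ref{prop:varie} yields $|B_{n_i+k_i-1}|_p\le D^{K}|B_{n_i-1}|_p$, whence $h(\beta^{(i)})\le c\,|B_{n_i-1}|_p^{4}$ for a constant $c$. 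This is the source of the factor $4$ (against the factor $2$ in Theorem \ref{qper}) and hence of the hypothesis $C>4\log D/\log p-1$. I expect Hypothesis \ref{hyp:1} on $(b_1,\dots,b_{n_i})$ to enter as in Lemma \ref{lemma:h2}, forcing the pre-period convergents $A_{n_i-1},B_{n_i-1}$ to be archimedean-small (via the Fibonacci estimate), which is what lets me verify the archimedean side ($\xi(\infty)=0$, i.e. $|\alpha-\beta^{(i)}|_\infty\le1$) of the hypotheses of Corollary \ref{coroth}.

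Second, and this is the main obstacle, the $\beta^{(i)}$ no longer lie in a single quadratic field, so Corollary \ref{coroth} cannot be applied directly. Here I use that each partial quotient $b_j$ ($j\ge1$) satisfies both $|b_j|_p\le D$ and $|b_j|_\infty<p/2$, the latter being built into Browkin's $s$ by \eqref{eq:bro2}; consequently the $b_j$ range over a finite set, the blocks of bounded length $k_i\le K$ take only finitely many values, and the purely periodic tails $[\overline{b_{n_i},\dots,b_{n_i+k_i-1}}]$ — hence the $\beta^{(i)}$, which are $\mathbb Q$-Möbius transforms of them (cf. \eqref{eq:id-base}) — lie in finitely many quadratic fields. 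By pigeonhole I pass to an infinite subsequence with all $\beta^{(i)}$ in one fixed quadratic field $K$; since $\deg\alpha\ge3$ we have $\alpha\notin K$, so $\alpha\in\overline K$ and Corollary \ref{coroth} applies along this subsequence.

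It then remains to run the numerics of Theorem \ref{qper}. If $|\alpha-\beta^{(i)}|_p\le|B_{n_i-1}|_p^{-4\omega}$ held for infinitely many $i$ with some fixed $\omega>2$, then by $h(\beta^{(i)})\le c|B_{n_i-1}|_p^4$ and Remark \ref{rem:H} this would give $|\alpha-\beta^{(i)}|_p\le C'H(\beta^{(i)})^{-\omega}$, contradicting Corollary \ref{coroth}; hence $|\alpha-\beta^{(i)}|_p>|B_{n_i-1}|_p^{-4\omega}$ for all large $i$. Combining with the upper bound gives $|B_{n_i+\lambda_ik_i-1}|_p<|B_{n_i-1}|_p^{2\omega}$, and inserting $p^{n}\le|B_n|_p\le D^{n}$ (from $p\le|b_j|_p\le D$) together with $k_i\ge1$ forces $\lambda_i/n_i<2\omega\log D/\log p-1$ for those $i$. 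Since $C>4\log D/\log p-1$ one may fix $\omega>2$ with $2\omega\log D/\log p-1<C$, contradicting $\lambda_i/n_i>C$. Therefore $\alpha$ cannot be algebraic of degree $\ge3$, i.e. $\alpha$ is transcendental or a quadratic irrational. I expect the delicate step to be the verification of the archimedean hypothesis of Corollary \ref{coroth} for the conjugates of the quadratic $\beta^{(i)}$ — this is precisely where Hypothesis \ref{hyp:1} is needed, and it is also the point at which the argument loses its force for a would-be quadratic $\alpha\in K$, which is why the quadratic case cannot be excluded.
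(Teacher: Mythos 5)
Your proposal is correct and follows essentially the same route as the paper's proof: pigeonhole on the bounded $k_i$'s and on the finitely many possible partial quotients (bounded in both $|\cdot|_p$ and, by \eqref{eq:bro2}, in $|\cdot|_\infty$) to fix the repeated block and hence a single quadratic field, then Lemma \ref{1} for the upper bound, Lemma \ref{lemma:h1} for the height (the source of the factor $4$), Corollary \ref{coroth} for the lower bound, and the same closing numerics with $\omega$ chosen slightly larger than $2$. The only divergence is your expectation that Hypothesis \ref{hyp:1} must be invoked to verify an archimedean condition for Corollary \ref{coroth}: that corollary as stated carries no archimedean hypothesis, and the paper's own proof of this theorem never uses Hypothesis \ref{hyp:1} (it is effectively an unused assumption there), so your ``delicate step'' is not actually needed.
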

	
	\begin{proof}
		Let us suppose $\alpha$ an algebraic number of degree $> 2$.
		Since $(k_i)_{i \geq 0}$ is a bounded sequence of positive integers, there exist infinitely many $j$ such that
		\[k_j = k, \quad b_{n_j} = \hat b_{1}, \quad \ldots, \quad b_{n_j + k - 1} = \hat b_{k}\]
		for fixed $k \in \mathbb Z$ and $\hat b_1, \ldots \hat b_k \in \mathbb Z[p^{-1}]$. We define infinitely many irrational numbers of the kind
		\[ \beta^{(j)} = [0, b_1, \ldots, b_{n_j-1}, \overline{\hat b_1, \ldots, \hat b_k}]. \]
		By construction, the $\beta^{(j)}$’s have the first $n_j + k_j\lambda_j$ partial quotients equal to the ones of $\alpha$,
		thus, by Lemma 1, it follows
		\begin{equation}\label{eq:ab1}
			| \alpha - \beta^{(j)} |_p < |B_{n_j+k_j\lambda_j-1}|_p^{-2}.
		\end{equation}
		Moreover, we have
		\begin{equation} \label{eq:ab2}
			| \alpha - \beta^{(j)} |_p > |B_{n_j+k-1}|_p^{-4\omega}
		\end{equation}
		with $\omega > 2$. Indeed, if $| \alpha - \beta^{(j)} |_p \leq |B_{n_j+k-1}|_p^{-4\omega}$, then by Lemma \ref{lemma:h1} and Remark \ref{rem:H}, we have
		\[  | \alpha - \beta^{(j)} |_p \leq (2|B_{n_j+k-1}|_p^4)^{\varepsilon - \omega} \leq C H(\beta^{(i)})^{\varepsilon - \omega}, \]
		with $0 < \varepsilon < \omega - 2$ and $C > 0$, in contradiction with Corollary \ref{coroth}. Thus, using \eqref{eq:ab1} and \eqref{eq:ab2}, we obtain
		\begin{equation} \label{eq:ab3}
			|B_{n_j+k_j\lambda_j -1}|_p < |B_{n_j-k-1}|_p^{2\omega}.
		\end{equation}
		Since $|b_i|_p$ is bounded by D, we have $p \leq |b_i|_p \leq D$, for $i \geq 1$, from which we get $p^i \leq |B_i|_p \leq D^i$. Hence, in order that \eqref{eq:ab3} holds, there exists $a > 0$ such that
		\[ \lambda_j < a + \left( \frac{1}{2}(C+1)\omega-1 \right) n_j. \]
		Considering that $Cn_j < \lambda_j$ and $C > 4 \frac{\log D}{\log p} - 1$, we get
		\[ \left( 1 - \frac{\omega}{2} \right)(C+1) + \delta < \frac{a}{n_j} \]
		where $\delta = 4 \frac{\log D}{\log p} - 1$, which can not be satisfied for each $\omega > 2$ and $j$ sufficiently large.
	\end{proof}
	
	\begin{theorem}\label{pal}
		Assume $\alpha\in\mathbf Q_p$ has the Browkin continued fraction $\alpha = [0, b_1, b_2, \ldots]$, where $(b_n)_{n\ge 1}$ is a sequence beginning with arbitrarily long palindromes and $|b_n|_p \geq p^4$ for all $n \gg 0$. Also, assume that there exists a constant $C>0$ such that either $b_n>C\;\forall\,n$ or $-b_n<C\;\forall\,n$ holds. Then $\alpha$ is either transcendental or quadratic irrational.
	\end{theorem}
	
	%DOMANDA (FORSE INGENUA): e se la frazione continua iniziasse con un numero finito di interi $p$-adici "casuali", per poi avere una successione palindroma, allora il teorema varrebbe ancora?
	%e.g. (archimedeo): Euler-Mascheroni constant \gamma=[0,1,1,2,1,2,1,...] e' trascendente, ha, dopo il primo 1=a_1, un blocco palindromo 12121. 
	
	\begin{proof}
		Let $n$ be a fixed natural number. Let $A_n/B_n$ be the $n$th convergent of $\alpha$, with $(A_i)$ and $(B_i)$ as in \eqref{eq:dfnAB}. By classical results on continued fractions, we have the following unique decomposition
		
		\[
		M_n=\begin{pmatrix}
			B_n & B_{n-1}\\
			A_n & A_{n-1}
		\end{pmatrix}
		= \begin{pmatrix}
			b_1 & 1 \\
			1   & 0
		\end{pmatrix}\begin{pmatrix}
			b_2 & 1 \\
			1 & 0
		\end{pmatrix}\cdots \begin{pmatrix}
			b_n & 1 \\
			1 & 0
		\end{pmatrix}.
		\]
		Indeed, by definition, since $A_n=b_nA_{n-1}+A_{n-2}$ and $B_n=b_nB_{n-1}+B_{n-2}$, we have
		
		\[
		M_n=M_{n-1}
		\begin{pmatrix}
			b_n & 1\\
			1 & 0    \end{pmatrix}
		\]
		and by a straightforward induction we obtain the required decomposition.
		Thus the matrix $M_n$ is symmetrical if and only if $(b_1,...,b_n)$ is palindromic. Assuming this is the case, we immediately see that  $A_n=B_{n-1}$. Then,  by Lemma \ref{1}, we have 
		$$\left\lvert \alpha-\cfrac{A_n}{B_n}\right\rvert_p < \cfrac{1}{|B_n|_p^2}  < \cfrac{|b_1|_p}{|B_n|_p^2}.$$
		Moreover, recalling that $A_n=B_{n-1}$ and $B_nA_{n-1}-A_nB_{n-1}=(-1)^n$, using Lemma \ref{1}, we obtain
		\begin{align*}
			\left\lvert \alpha^2 - \cfrac{A_{n-1}}{B_n}\right\lvert_p & = \left\lvert \alpha^2 -\cfrac{A_{n-1}}{B_{n-1}}\cfrac{A_{n}}{B_n}\right\lvert_p=\left\lvert\left(\alpha+ \cfrac{A_{n-1}}{B_{n-1}}\right)\left(\alpha-\cfrac{A_n}{B_n}\right)
			-\cfrac{(-1)^n\alpha}{B_nB_{n-1}}  \right\rvert_p
			\\ &\le \max\left\{ \left\lvert \alpha+ \cfrac{A_{n-1}}{B_{n-1}}\right\lvert_p \left\lvert \alpha-\cfrac{A_n}{B_n} \right\lvert_p ; \cfrac{|\alpha|_p}{|B_n B_{n-1}|_p} \right\}\\
			&\le \max\left\{\left\lvert \cfrac{\alpha}{B_n^2}\right\rvert_p ; \cfrac{|\alpha|_p}{|B_nB_{n-1}|_p} \right\} < \cfrac{|b_1|_p}{|B_n|_p^2}. 
		\end{align*}  where the first passage comes from $A_n=B_{n-1}$ and the inequalities in the last line hold because $|\alpha|_p < 1$ (since $b_0=0$) and 
		$$\cfrac{1}{|B_nB_{n-1}|_p} = \cfrac{|b_n|_p}{|B_n|_p^2} = \cfrac{|b_1|_p}{|B_n|_p^2}\,,$$
		remembering that we are  choosing $n$ so that $(b_1, \ldots, b_n)$ is a palindrome.
		Considering that $|b_n|_p \geq p^4$ for $n \gg 0$, by the proof of Lemma \ref{lemma:inf-p} we know that $|A_n|_\infty < |A_n|_p^{1/4}$, $|A_{n-1}|_\infty < |A_{n-1}|_p^{1/4}$ and $|B_n|_\infty < |B_n|_p^{1/4}$, for $n \gg 0$. Also, Proposition \ref{prop:varie} shows $|A_n|_p>1$ for $n$ large enough,  and $|B_n|_p = \max(|A_n|_p, |A_{n-1}|_p, |B_n|_p)$ for all $n$. Finally, a simple induction, using \eqref{eq:dfnAB}, shows
		\begin{equation} \label{eq:abn>c} \begin{cases} A_n>0\;\forall\,n\;\; & \text{ if }b_n>0\;\forall\,n;\\   A_{2n}<0\text{ and }A_{2n+1}>0\;\; & \text{ if }b_n<0\;\forall\,n. \end{cases} \end{equation}
		Since $b_0=0$, formula \eqref{eq:dfnAB} yields $A_1=1$, $A_2=b_2$,
		$$|A_3|_\infty=b_3b_2+1>C^2+1>C$$
		(thanks to our hypotheses on the $b_n$'s) and, assuming inductively that $|A_{n-1}|_\infty$, $|A_{n-2}|_\infty$ are both bigger than $C$,
		$$|A_n|_\infty=|b_nA_{n-1}|_\infty+|A_{n-2}|_\infty>C^2+C>C$$
		because of \eqref{eq:abn>c} and $|b_n|_\infty>C$. Thus, by taking $(x,y,z)=(A_n,A_{n-1},B_n)$ (with $n$ such that $(b_1, \ldots, b_n)$ is a palindrome) and $2>\delta>15/8$, we can apply Corollary \ref{coschmidt} to conclude. 
		%{\color{red} NOTA: questo lo aggiungo solo per noi, ma non so se lo scriverei nella dimostrazione. Siccome $|b_1|_p$ costante, abbiamo in definitiva \[ \left\lvert \alpha/b_1-\cfrac{A_n}{b_1B_n}\right\rvert_p < \cfrac{1}{|B_n|_p^2}, \quad \left\lvert (\alpha/b_1)^2-\cfrac{A_{n-1}}{b_1^2B_n}\right\rvert_p < \cfrac{1}{|B_n|_p^2} \] e quindi abbiamo che $\alpha/b_1$ trascendente usando la terna $(A_n/b_1, A_{n-1}/b_1^2,B_n)$ Questa è la parte su cui ho qualche insicurezza, perché non sono sicuro di averla capita perfettamente anche nel caso reale. Come vi dicevo anche nel caso reale, Bugeaud arriva a trovare \[\left\lvert \alpha^2-\cfrac{A_{n-1}}{B_n}\right\rvert < \cfrac{costante}{|B_n|^2}\] e concludono (senza specificare altro) la trascendenza per $\alpha$}.
	\end{proof}
	
	%\begin{rem}
	%The condition $\lvert A_n \rvert_\infty, \lvert B_n \rvert_\infty > 1$, for  $n$ sufficiently large, given in the above Theorem, is not particularly restrictive. Indeed, the partial quotients satisfy $\lvert b_n \rvert_\infty < p/2$ and $v_p(b_n) < 0$, i.e., we can write them as $b_n = \cfrac{\tilde b_n}{p^{e_n}}$, with $\tilde b_n \in \mathbb Z$, $v_p(\tilde b_n) = 0$, $e_n >0$. Thus, we have $\lvert \tilde b_n \rvert_\infty < \cfrac{p^{e_{n+1}}}{2}$ and random choices of the partial quotients provide more frequently $\cfrac{p^e_n}{2} < \lvert \tilde b_n \rvert_\infty < \cfrac{p^{e_{n+1}}}{2}$. From this we get $\lvert A_n \rvert_\infty, \lvert B_n \rvert_\infty > 1$, for $n$ sufficiently large. 
	%\\In this case, it is of particular interest to observe that the continued fraction $[b_0, b_1, \ldots]$ seems (experimentally) to converge always also in $\mathbb R$. Hence, it could be very interesting to prove the Euclidean convergence for a $p$--adic continued fraction such that $\lvert A_n \rvert_\infty, \lvert B_n \rvert_\infty > 1$, for any $n$ sufficiently large.
	%\end{rem}
	
	\begin{rem}
		{\em
			The continued fraction expansion of a quadratic irrational can easily contain arbitrarily long palindromes, for example if it has a period of length two. On the other hand, recall that it is not known whether quadratic irrationals always have a periodic expansion (contrarily to the classical case). Hence, non-periodicity of $(b_i)_{i\geq1}$ is not enough to ensure transcendence.}
	\end{rem}
	
	\begin{exam}
		{\em
			Similarly to the archimedean case (see \cite{sturm}), we introduce $p$-adic $\mathit{Sturmian}$ continued fractions of slope $\theta$ as an instance of $p$-adic continued fractions beginning with arbitrarily large palindromes.
			Consider a real $\theta\in(0,1)$ and two distinct $a,b\in\mathbb Z[1/p]$.
			Then we define
			
			\[
			\sigma_\theta=[0,c_1,...]
			\]
			where 
			
			$$c_n= \begin{cases}
				a,& \text{ if } \lfloor (n+1)\theta\rfloor-\lfloor n\theta\rfloor=0
				
				\\ b,& \text{ if } \lfloor (n+1)\theta\rfloor-\lfloor n\theta\rfloor=1
			\end{cases}$$
			
			We also consider the $p$-adic $\mathit{Thue}$-$\mathit{Morse}$ continued fraction. A Thue-Morse sequence $(c_n)_n$ with values in $a,b$
			is defined by $c_n=a$ if the binary expansion of $n$ has an even  number of digits $1$, and $c_n=b$ otherwise. For instance, the word $c_0...c_{4^n-1}$ is clearly a palindrome. Consider the $p$-adic number $\theta=[0,c_0,...,c_n]$ where $c_i\in\{a,b\}$. If the sequence of partial quotients $(c_n)_n$ is a
			Thue-Morse word, then we call $\theta$ a $p$-adic Thue-Morse continued fraction.
			By Theorem \ref{pal} we immediately obtain the transcendence of both the $p$-adic Sturmian  continued fractions and the $p$-adic Thue-Morse continued fractions beginning with arbitrarily long palindromes.
		}
	\end{exam}

	\bibliographystyle{amsplain}

\end{document}